\definecolor{mblue}{rgb}{0,0,.8}
\newcommand{\N}{\mathbb N}
\newcommand{\Z}{\mathbb Z}
\newcommand{\Q}{\mathbb Q}
\newcommand{\Qbar}{\overline{\Q}}
\newcommand{\Zbar}{\overline{\Z}}
\newcommand{\F}{\mathbb F}
\newcommand{\R}{\mathbb R}
\newcommand{\p}{\mathfrak p}
\newcommand{\OO}{\mathcal O}
 \newcommand{\abcd}[4]{\left(
         \begin{smallmatrix}#1&#2\\#3&#4\end{smallmatrix}\right)}
\newcommand{\TT}{\mathbb{T}}
\newcommand{\fS}{\mathfrak{S}}
\newcommand{\fR}{\mathfrak{R}}
\newcommand{\rhobar}{\overline{\rho}}
\newcommand{\Zmod}[1]{\overline{\Z/({#1})}}
\newtheorem{thm}{Theorem}
\newtheorem{lem}[thm]{Lemma}
\newtheorem{dfn}[thm]{Definition}
\newtheorem{prop}[thm]{Proposition}
\newtheorem{cor}[thm]{Corollary}
\newtheorem{conj}{Conjecture}
\newtheorem{rem}{Remark}
\newtheorem{question}{Question}
\DeclareMathOperator{\GL}{GL} \DeclareMathOperator{\SL}{SL}  
\DeclareMathOperator{\Gal}{Gal}
\DeclareMathOperator{\Frob}{Frob}
\DeclareMathOperator{\Sym}{Sym}
\begin{document}

\title{On certain finiteness questions in the arithmetic of modular forms}
\author[Ian Kiming, Nadim Rustom, Gabor Wiese]{Ian Kiming, Nadim Rustom, Gabor Wiese}
\address[Ian Kiming, Nadim Rustom]{Department of Mathematical Sciences, University of Copenhagen, Universitetsparken 5, DK-2100 Copenhagen \O ,
Denmark.}
\email{kiming@math.ku.dk}
\email{rustom@math.ku.dk}
\address[Gabor Wiese]{Universit\'e du Luxembourg, Facult\'e des Sciences, de la Technologie et de la Communication, 6, rue Richard Coudenhove-Kalergi, L-1359 Luxembourg, Luxembourg}
\email{gabor.wiese@uni.lu}

\subjclass[2000]{11F80, 11F33}
\keywords{Modular forms, Galois representations, higher congruences}

\begin{abstract}
We investigate certain finiteness questions that arise naturally when studying approximations modulo prime powers of $p$-adic Galois representations coming from modular forms. We link these finiteness statements with a question by K.\ Buzzard concerning $p$-adic coefficient fields of Hecke eigenforms.

Specifically, we conjecture that for fixed $N$, $m$, and prime $p$ with $p$ not dividing $N$, there is only a finite number of reductions modulo $p^m$ of normalized eigenforms on $\Gamma_1(N)$.

We consider various variants of our basic finiteness conjecture, prove a weak version of it, and give some numerical evidence.
\end{abstract}

\maketitle

\section{Introduction}\label{intro} Let $p$ be a prime number. For theoretical and practical purposes, $p$-adic modular Galois representations are naturally
studied through their modulo~$p^m$ approximations. As explained below, with this article we would like to contribute to the ``weight aspect'' by presenting
theoretical results, numerical data and proposing some guiding finiteness conjectures.

Although our motivation stems foremostly from Galois representations, in this article we work with modular forms and restrict to the case of classical (elliptic) modular forms, as they already present many problems that are not yet well understood. Moreover, we chose to consider approximations modulo~$p^m$ for fixed~$m$, instead of varying or ``big enough''~$m$. This choice is forced upon us by the underlying motivation of our work. We explain this motivation below in \ref{subsection:motivation}.

\subsection{Definition of ``modulo~\texorpdfstring{$p^m$}{pm}''.} We first need to define the term ``modulo~$p^m$''. We fix once and for all algebraic closures $\Qbar$ (containing $\Zbar$, its ring of integers), $\Qbar_p$ (containing $\Zbar_p$, the elements integral over~$\Z_p$) of $\Q$ and $\Q_p$, respectively, as well as an embedding $\Qbar \hookrightarrow \Qbar_p$, which we will tacitly be using. Let $v_p$ be the normalized ($v_p(p)=1$) valuation on~$\Qbar_p$. The two natural requirements that (1) ``modulo~$p$'' should mean ``modulo~$\p$'' when coefficients/traces lie in some extension $K/\Q_p$ with $\p$ above~$p$, and that (2) the meaning of ``modulo~$p^m$'' be invariant under extensions of the field of coefficients/traces, force upon us the following definition, introduced in~\cite{XaGa} and utilized in~\cite{ckw}: For $m \in \N$ define
$$
\Zmod{p^m} = \Zbar_p/\{x \in \Zbar_p \;|\; v_p(x)>m-1\}.
$$
For $a,b \in \Zbar_p$ we say that $a \equiv b \mod p^m$ if the images of $a,b$ in $\Zmod{p^m}$ coincide. More concretely, for $a,b \in \OO_K \subset \Zbar_p$ (the valuation ring of~$K/\Q_p$) we have $a \equiv b \mod p^m$ if and only if $a-b \in \p^{e_{K/\Q}(m-1)+1}$, where $e_{K/\Q}$ is the ramification index. Moreover, by ``reducing modulo~$p^m$'' we understand taking the image under the natural maps $\Zbar_p \twoheadrightarrow \Zmod{p^m}$ or $\OO \twoheadrightarrow \OO/\p^{e_{K/\Q}(m-1)+1}$, respectively.

\subsection{Finiteness conditions and conjectures.}\label{subsection:finiteness_conditions} Before formulating the various finiteness conditions, our conjectures, and our results, we need to set up some basic notation.

For $N \in \N$, we let $\fS(N)$ be the set of normalized newforms on $\Gamma_1(N)$ of some weight $k$. Thus, every member of $\fS(N)$ is in particular a normalized cuspidal eigenform for all Hecke operators. Here, ``all Hecke operators'' obviously refers to level $N$. In the following we will also more generally be considering normalized cuspidal eigenforms for all Hecke operators, and these will be referred to simply as ``eigenforms''. Of course, ``eigenforms'' is a more general concept than newforms, as eigenforms might be oldforms.

The eigenvalue of any Hecke operator $T_n$ will always be denoted $a_n(f)$; it coincides with the $n$-th coefficient of the standard $q$-expansion. For $f\in \fS(N)$ we have an attached $p$-adic Galois representation $\rho_{f,p}: G_\Q \to \GL_2(\Zbar_p)$, where $G_\Q$ is shorthand for $\Gal(\Qbar/\Q)$. For $m \in \N$ we denote by $\rho_{f,p,m}$ the reduction of $\rho_{f,p}$ modulo~$p^m$, i.e., the composition of $\rho_{f,p}$ with the map induced from $\Zbar_p \twoheadrightarrow \Zmod{p^m}$. As usual, we write $\rhobar_{f,p} := \rho_{f,p,1}$. In general, these Galois representations depend on the choice of a lattice, but we will only be working with their characters, which are well-defined in all cases. See \ref{subsection:galois_representation} for more details.

\begin{dfn}\label{definition:basic_definitions} Fix $N\in\N$, $p$ a prime not dividing $N$, and $m\in\N$.

As above we denote by $\fS(N)$ the set of all normalized newforms on $\Gamma_1(N)$ and some weight~$k \ge 1$ (we allow $k=1$ in the definition, but in the article weight~$1$ will not play any role.) Elements $f\in\fS(N)$ are identified with their standard $q$-expansions.

Let $\fR(N)$ denote the set of all characters of all $p$-adic Galois representations arising from elements $f\in\fS(N)$.

We let $\fS_m(N)$ denote the set of reductions modulo $p^m$ of the standard $q$-ex\-pan\-sions of the elements of~$\fS(N)$.
Elements in $\fS_m(N)$ are called strong eigenforms modulo $p^m$.
Similarly, we let $\fR_m(N)$ be the set of reductions modulo $p^m$ of $\fR(N)$ (in the sense of composition with the natural
projection).

We work with the `naive' definition of modular forms with coefficients in a ring~$R$: the $R$-submodule of $R[[q]]$ spanned by the modular forms with integral standard $q$-expansions.
The space of cusp forms on $\Gamma_1(N)$ of weight $k$ and coefficients in $\Zmod{p^m}$ is denoted by $S_k(\Zmod{p^m})$. The Hecke operators $T_n$ act on the space $S_k(\Zmod{p^m})$. A weak eigenform modulo $p^m$ is a normalized eigenform for all $T_n$, $n\in\N$, in $S_k(\Zmod{p^m})$ for some weight $k$.

We also denote by $S_{\le k}(\Zmod{p^m})$ the direct sum over all $j\le k$ of the spaces $S_j(\Zmod{p^m})$.
\end{dfn}

We recall below in section \ref{subsection:definition_of_weight_bounds} an example due to Calegari and Emerton showing that there may be an infinite number of weak eigenforms in a space $S_k(\Zmod{p^2})$. Hence the distinction between strong and weak eigenforms modulo $p^m$ is important.
\smallskip

Consider now the following finiteness statements and recall that $p \nmid N$:
\medskip

\noindent $\mathbf{Strong}_m$: The set $\fS_m(N)$ is finite, i.e., the set of strong eigenforms modulo $p^m$ on $\Gamma_1(N)$ is finite.
\medskip

\noindent $\mathbf{Rep}_m$: The set $\fR_m(N)$ is finite, i.e., the set of reductions modulo $p^m$ of the characters of the Galois representations of Hecke eigenforms in level $N$ is finite.
\medskip

\noindent $\mathbf{Weak}_m$: There exists a constant $k = k(N,p,m)$ depending only on $N$, $p$, $m$, such that $\fS_m(N) \subseteq S_{\le k}(\Zmod{p^m})$, i.e., any strong eigenform modulo $p^m$ occurs in the weak sense at weight at most $k$.
\medskip

\noindent $\mathbf{Fin}_m$: For any $k$ the set $\fS_m(N) \cap S_{\le k}(\Zmod{p^m})$ is finite, i.e., for any $k$ there is only a finite number of strong eigenforms occurring in the weak sense at weight $k$.
\medskip

We also consider the following finiteness condition that was raised as a question by K.\ Buzzard at least for $\Gamma_0(N)$ (\cite{buzzard}, Question~4.4). Recall again $p \nmid N$.
If $f\in \fS(N)$, denote by $K_{f,p}$ the $p$-adic coefficient field of $f$:
$$
K_{f,p} = \Q_p ( a_{\ell}(f) \mid ~ \mbox{$\ell$ prime with $\ell\nmid Np$}).
$$

The finiteness condition then concerns the degree $[K_{f,p} : \Q_p]$:
\medskip

\noindent $\mathbf{(B)}$: There is a constant $c(N,p)$ depending only on $N$, $p$, such that $[K_{f,p} : \Q_p] \le c(N,p)$ for all $f\in \fS(N)$.
\medskip

Additionally, we will be considering a finiteness condition $\mathbf{I}_m$ that asserts a uniform bound for all $f\in \fS(N)$ for the index of the projections to $\Zmod{p^m}$ of $\Z_p [ a_{\ell}(f) \mid ~ \mbox{$\ell$ prime with $\ell\nmid Np$}]$ inside its normalization. See \ref{subsection:I_m} below for a precise statement of the condition.
\medskip

Our most basic finiteness conjecture concerns strong eigenforms modulo $p^m$. We will see further below that the following two conjectures are in fact equivalent.

\begin{conj}\label{conj:strong_weight_bounds}
$\mathbf{Strong}_m$ holds for any $m$.
\end{conj}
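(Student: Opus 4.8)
We indicate the strategy we would follow to attack Conjecture~\ref{conj:strong_weight_bounds}, and where we expect the essential difficulty to lie.

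The plan is to argue by induction on $m$. The base case $m=1$ is the assertion $\mathbf{Strong}_1$ that, for $p\nmid N$, only finitely many systems of Hecke eigenvalues with values in $\Fbar_p$ occur in $\bigoplus_{k\ge 1}S_k(\Fbar_p)$ on $\Gamma_1(N)$; this is classical. Multiplication by the Hasse invariant, represented by $E_{p-1}$ whose $q$-expansion is $\equiv 1\bmod p$, together with the theory of the $\theta$-operator, allows one to replace any weight $k$ by a weight lying in a fixed finite interval without altering the eigensystem, and in bounded weight the relevant Hecke algebra is a finite-dimensional $\Fbar_p$-algebra and hence has only finitely many maximal ideals.

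For the inductive step one would assume that $\fS_{m-1}(N)$ is finite. Since reduction modulo $p^{m-1}$ carries $\fS_m(N)$ to $\fS_{m-1}(N)$, it then suffices to bound, for each $\bar g\in\fS_{m-1}(N)$, the number of elements of $\fS_m(N)$ reducing to $\bar g$. There are two natural reformulations. On the Galois side, using the dictionary between eigenforms and the (pseudo-)representations $\Tr\rho_{f,p}$ — the same dictionary that underlies the equivalence, established below, between Conjecture~\ref{conj:strong_weight_bounds} and its analogue for $\mathbf{Rep}_m$ — this becomes the question whether only finitely many reductions modulo $p^m$ of these pseudo-representations lift a given one modulo $p^{m-1}$; that is, a statement about the mod~$p^m$ points of a pseudo-deformation ring of $\rhobar_{g,p}$ that is unramified outside $Np$ and satisfies at $p$ the $p$-adic Hodge-theoretic condition attached to a classical weight. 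On the side of forms, one would try to split $\mathbf{Strong}_m$ as $\mathbf{Weak}_m\wedge\mathbf{Fin}_m$: the condition $\mathbf{Fin}_m$ is accessible, since $S_{\le k}(\Z)$ carries a Hecke algebra that is finite as a $\Z$-module, whence $S_{\le k}(\Zmod{p^m})$ supports only finitely many eigensystems once one invokes the usual lifting of mod~$p^m$ eigensystems to characteristic zero; the real content is then $\mathbf{Weak}_m$, namely the production of a weight bound $k(N,p,m)$.

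The hard part will be precisely $\mathbf{Weak}_m$. Over $\Fbar_p$ one lowers the weight at will because $E_{p-1}\equiv 1\bmod p$ is a form of weight $p-1$ congruent to $1$; modulo $p^m$ the lowest-weight form congruent to $1$ that one can write down is $E_{p-1}^{p^{m-1}}$, of weight $(p-1)p^{m-1}$, since $E_{p-1}=1+p(\cdots)$ is congruent to $1$ only after raising to the $p^{m-1}$-st power. Multiplying an eigenform $f$ of weight $k$ by powers of $E_{p-1}^{p^{m-1}}$ shows that the reduction of the $q$-expansion of $f$ modulo $p^m$ recurs in all weights $k+(p-1)p^{m-1}j$, $j\ge 0$, but it gives no way to \emph{decrease} the weight, as $E_{p-1}^{p^{m-1}}$ is not invertible. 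Bounding the weight of a given mod~$p^m$ eigensystem therefore demands genuinely new input: either a direct analysis, via integral $p$-adic Hodge theory (Fontaine--Laffaille theory in a range growing with $m$), of which crystalline-at-$p$ local conditions modulo $p^m$ can occur, showing high-weight ones to be redundant; or a global input through Coleman families and the geometry of the eigencurve, showing that the classical specializations in a fixed residue class of weights have reductions modulo $p^m$ that stabilize. Making this uniform over the infinitely many components that meet a fixed weight, whose slopes are unbounded, is the crux, and it is presumably also the point of contact with Buzzard's condition $\mathbf{(B)}$, since a uniform bound on $[K_{f,p}:\Q_p]$ already confines each $a_\ell(f)\bmod p^m$ ($\ell\nmid Np$) to a fixed finite subset of $\Zmod{p^m}$. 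It is for these reasons that below we can only establish a weak version of the conjecture and otherwise support it with numerical evidence.
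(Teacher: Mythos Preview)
The statement is a conjecture; the paper does not prove it, and your proposal rightly concedes as much at the end. Your decomposition $\mathbf{Strong}_m \Leftrightarrow \mathbf{Weak}_m \wedge \mathbf{Fin}_m$ matches the paper's. However, you have the relative difficulty of the two halves exactly reversed.

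You call $\mathbf{Fin}_m$ ``accessible'' on the grounds that the Hecke algebra on $S_{\le k}(\Z)$ is finite as a $\Z$-module and that one may invoke ``the usual lifting of mod~$p^m$ eigensystems to characteristic zero.'' Both steps fail for $m>1$. The Deligne--Serre lifting lemma concerns residue fields and does not lift eigensystems out of $\Zmod{p^m}$; the paper exhibits in \S\ref{section:weak_not_strong} an explicit weak eigenform mod~$4$ that is not strong in any weight. More decisively, finiteness of the Hecke algebra over~$\Z$ does not bound the set of ring homomorphisms into $\Zmod{p^m}$, since the latter is a direct limit over all $\OO/\p^{e(m-1)+1}$: the Calegari--Emerton construction recalled in \S\ref{section:abundance_weak} produces \emph{infinitely many} weak eigenforms already in a single $S_k(\Zmod{p^2})$. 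Thus the set of weak eigensystems in bounded weight is in general infinite, and the content of $\mathbf{Fin}_m$ is precisely that only finitely many of them are strong. The paper leaves $\mathbf{Fin}_m$ open; by the Corollary following Theorem~\ref{thm:weak_weight_bounds} it is equivalent to Conjecture~\ref{conj:strong_weight_bounds} itself.

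Conversely, the step you flag as the ``hard part,'' namely $\mathbf{Weak}_m$, is actually proved in the paper as Theorem~\ref{thm:weak_weight_bounds} (for $p\ge 5$), via an argument of Calegari. The idea is not to lower the weight by dividing by Eisenstein series---which, as you note, is impossible---but to pass to group cohomology $H^1(\Gamma_1(N)\cap\Gamma(p^m),\,V_{k-2}\otimes\Z/p^m\Z)$ and observe that, modulo~$p^m$, the $p$-deprived Hecke action depends only on $k$ modulo $\tilde\varphi(p^m)$; consequently the $p$-deprived Hecke algebra on $\bigoplus_k M_k$ acts through a finite quotient, and every eigensystem already occurs in bounded weight. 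Your speculation about Fontaine--Laffaille theory or the eigencurve is therefore not needed for $\mathbf{Weak}_m$; such tools, and the link to Buzzard's condition~$\mathbf{(B)}$, become relevant only for the genuine open problem $\mathbf{Fin}_m$, where (cf.\ Theorem~\ref{thm:finiteness_conditions}) control of the coefficient rings is exactly what is at stake.
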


\begin{conj}\label{conj:finitely_many_strong_at_fixed_weight}
$\mathbf{Fin}_m$ holds for any $m$.
\end{conj}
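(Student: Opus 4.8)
The plan is to prove Conjecture~\ref{conj:finitely_many_strong_at_fixed_weight} by reducing it to $\mathbf{Strong}_m$, and then to attack that via the coefficient-field bound $\mathbf{(B)}$. First, recall that $\mathbf{Weak}_m$ holds, with an explicit constant $k=k(N,p,m)$ (see Section~\ref{subsection:definition_of_weight_bounds}): every strong eigenform modulo $p^m$ on $\Gamma_1(N)$ already occurs in the weak sense at some weight $\le k$. Hence $\fS_m(N)\subseteq S_{\le k}(\Zmod{p^m})$ for this single $k$, so $\mathbf{Fin}_m$ at weight $k$ forces $\fS_m(N)$ to be finite, i.e.\ gives $\mathbf{Strong}_m$; conversely $\mathbf{Strong}_m$ trivially implies $\mathbf{Fin}_m$ at every weight, since $\fS_m(N)\cap S_{\le k}(\Zmod{p^m})\subseteq\fS_m(N)$. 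Thus Conjectures~\ref{conj:strong_weight_bounds} and~\ref{conj:finitely_many_strong_at_fixed_weight} are equivalent, and it suffices to prove $\mathbf{Fin}_m$ at one fixed weight~$k$. (For $m=1$ everything in sight is classical, so the content is entirely in $m\ge 2$.)

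So fix $k$. Any class in $\fS_m(N)\cap S_{\le k}(\Zmod{p^m})$ is the reduction modulo $p^m$ of some $f\in\fS(N)$ of a priori unbounded weight, and since $N$ and $k$ are fixed, a Sturm-type bound shows this class in $S_{\le k}(\Zmod{p^m})$ is already pinned down by the truncation $(a_n(f)\bmod p^m)_{1\le n\le B}$ for a constant $B=B(N,k)$. Therefore $\mathbf{Fin}_m$ at weight $k$ is equivalent to the finiteness, inside $(\Zmod{p^m})^{B}$, of the set of such truncations as $f$ runs over the newforms in $\fS(N)$ whose reduction modulo $p^m$ lies in $S_{\le k}(\Zmod{p^m})$. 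By strong multiplicity one the full Hecke field of $f$ is generated already by the $a_\ell(f)$ with $\ell\nmid Np$, so every $a_n(f)$ lies in $\OO_{K_{f,p}}$, and the question becomes: how much can the images of the rings $\OO_{K_{f,p}}$ spread out inside $\Zmod{p^m}$ as $f$ varies?

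If condition $\mathbf{(B)}$ holds this is immediate. Then there are only finitely many fields among the $K_{f,p}$ (there are only finitely many extensions of $\Q_p$ of any bounded degree, by Krasner's lemma), and only finitely many ideals $\p^{e_{K_{f,p}/\Q}(m-1)+1}$ (the ramification indices being bounded), so the images in $\Zmod{p^m}$ of all the finite rings $\OO_{K_{f,p}}/\p^{e_{K_{f,p}/\Q}(m-1)+1}$ together make up one fixed finite set $T\subseteq\Zmod{p^m}$; every truncation then lies in $T^{B}$, which is finite. The same argument goes through with the weaker index hypothesis $\mathbf{I}_m$ of Section~\ref{subsection:I_m} in place of $\mathbf{(B)}$. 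In particular $\mathbf{(B)}$ (or $\mathbf{I}_m$) implies $\mathbf{Fin}_m$, hence implies $\mathbf{Strong}_m$ --- a natural intermediate target.

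The hard part, and the reason this is only a conjecture, is to bound unconditionally the $\p$-adic spreading of the $a_n(f)$ over newforms $f$ of arbitrarily large weight whose reduction modulo $p^m$ happens to have small weight. The only structural input is that these reductions lift to characteristic zero, and that this really must be used is shown by the Calegari--Emerton example, in which infinitely many \emph{weak} eigenforms occur in the single space $S_k(\Zmod{p^2})$. A natural line of attack is to group the $f$ by their residual representation $\rhobar_{f,p}$, of which there are only finitely many, and for each such $\bar\rho$ to work on the tame-level-$N$ eigencurve: the relevant $f$ are classical points of a single family (locally finite flat over weight space), and one would like to show that the locus where the reduction modulo $p^m$ has weight $\le k$ has finite image in $S_{\le k}(\Zmod{p^m})$. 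Making this precise seems to require controlling the family near the boundary of weight space, and I expect it to be no easier than Buzzard's question $\mathbf{(B)}$ itself; in any case, this step is the crux.
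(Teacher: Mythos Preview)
The statement you are attempting to prove is Conjecture~\ref{conj:finitely_many_strong_at_fixed_weight}, and the paper does \emph{not} prove it: it is stated as an open conjecture, shown to be equivalent to Conjecture~\ref{conj:strong_weight_bounds} (via Theorem~\ref{thm:weak_weight_bounds}), and shown to follow from the conditional hypothesis~$\mathbf{(B)}$ (Proposition~\ref{prop:conj1+2}\eqref{item:8}). Your write-up reproduces exactly these two pieces --- the equivalence with $\mathbf{Strong}_m$ and the deduction from~$\mathbf{(B)}$ --- and then, in your final paragraph, you correctly identify the remaining unconditional step as open and ``no easier than Buzzard's question $\mathbf{(B)}$ itself''. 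So your proposal is not a proof, and you know it is not a proof; there is nothing for it to be compared against, because the paper has no proof either. What you have written is a correct summary of the conditional implications surrounding the conjecture, together with an honest assessment that the crux is missing.

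Two small corrections to the conditional part. First, the paper proves $\mathbf{Weak}_m$ (Theorem~\ref{thm:weak_weight_bounds}) only under the assumption $p\ge 5$; for $p\in\{2,3\}$ only the weaker statement of Remark~\ref{rem:weak_weight_bounds} is established, so your reduction of $\mathbf{Fin}_m$ to $\mathbf{Strong}_m$ inherits that restriction. Second, your claim that ``the same argument goes through with the weaker index hypothesis $\mathbf{I}_m$ \ldots\ in place of $\mathbf{(B)}$'' is not correct as stated: $\mathbf{I}_m$ by itself bounds only the index of $A_m(f)$ in $C_m(f)$, not the size of $C_m(f)$; one needs in addition $\mathbf{Rep}_m$ to bound $\#A_m(f)$ (this is exactly Proposition~\ref{prop:conj1+2}\eqref{item:7}). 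With $\mathbf{I}_m$ alone the set $T\subseteq\Zmod{p^m}$ you construct could a priori be infinite.
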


\subsection{Relation between the conjectures and results.} It is immediately clear that statement $\mathbf{Strong}_m$ implies each of the statements $\mathbf{Rep}_m$, $\mathbf{Weak}_m$, and $\mathbf{Fin}_m$.

By work of Jochnowitz~\cite{jochnowitz_finiteness}, statement $\mathbf{Strong}_1$ is true, and hence so are $\mathbf{Rep}_1$, $\mathbf{Weak}_1$, and $\mathbf{Fin}_1$.

But already the question of whether $\mathbf{Strong}_2$ holds seems to be totally open; in fact, a result in this paper (Theorem~\ref{thm:finiteness_conditions}) suggests that $m=2$ is already the decisive case.
\smallskip

In section~\ref{section:buzzard} we prove the following relationship between some of the above finiteness conditions. In our opinion, this sheds light on some aspects of Buzzard's question, i.e., whether condition $\mathbf{(B)}$ holds, and it places our finiteness conjectures into a framework that has already attracted some attention (see for instance Conjecture~1.1 of~\cite{calegari_emerton_large_index} that implies (and is conjecturally equivalent to) statement $\mathbf{(B)}$.)

\begin{thm}\label{thm:finiteness_conditions} The following are equivalent:
\begin{enumerate} \item Statement $\mathbf{(B)}$ holds, and so in particular Buzzard's question has an affirmative answer.
\item For all $m\in\N$, statements $\mathbf{Strong}_m$ (or $\mathbf{Rep}_m$) and $\mathbf{I}_m$ hold.
\item Statements $\mathbf{Strong}_2$ (or $\mathbf{Rep}_2$) and $\mathbf{I}_2$ hold.
\end{enumerate}
\end{thm}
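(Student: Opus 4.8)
The plan is to establish the cycle $(1)\Rightarrow(2)\Rightarrow(3)\Rightarrow(1)$, in which $(2)\Rightarrow(3)$ is trivial (specialize $m=2$), together with the fact that $\mathbf{Strong}_m$ and $\mathbf{Rep}_m$ may be used interchangeably. The implication $\mathbf{Strong}_m\Rightarrow\mathbf{Rep}_m$ is immediate: finitely many $q$-expansions modulo $p^m$ give finitely many tuples $(a_\ell(f)\bmod p^m)_{\ell\nmid Np}$, hence, by continuity and Chebotarev density and because the target ring $\OO_{K_{f,p}}/\p^{\,e_{K_{f,p}/\Q_p}(m-1)+1}$ is finite, finitely many characters of the $\rho_{f,p}$ modulo $p^m$. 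For the converse $\mathbf{Rep}_m\Rightarrow\mathbf{Strong}_m$ one must check that the coefficients at primes dividing $Np$ take only finitely many values modulo $p^m$ as $f$ ranges over $\fS(N)$: for $\ell\mid N$ this follows from local--global compatibility together with the fact that $a_\ell(f)$ is $0$ or a root of unity times a power of $\ell$, whose reduction modulo $p^m$ lies in a fixed finite subset of $\Zmod{p^m}$; for $\ell=p$ it follows from the boundedness, independent of the weight, of the dimension of the slope-$<m$ part of overconvergent cusp forms of tame level $N$ (Coleman), combined with the $p$-adic analyticity of $a_p$ in the weight, which confines $a_p(f)\bmod p^m$ to a finite set. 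This $\ell=p$ point is the one genuinely nontrivial input, and I expect it to be the main obstacle.

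For $(1)\Rightarrow(2)$ I would assume $\mathbf{(B)}$, say $[K_{f,p}:\Q_p]\le c$ for all $f\in\fS(N)$, and write $e,r$ for the ramification and residue degrees of $K_{f,p}/\Q_p$, so $er\le c$. Then $\mathbf{I}_m$ is immediate, since the finite ring $\OO_{K_{f,p}}/\p^{\,e(m-1)+1}$ has cardinality $p^{r(e(m-1)+1)}\le p^{cm}$, so the index in it of the image of $\Z_p[a_\ell(f)\mid\ell\nmid Np]$ is at most $p^{cm}$, uniformly in $f$. For $\mathbf{Rep}_m$ (hence also $\mathbf{Strong}_m$, by the first paragraph): the reduction modulo $p^m$ of $\rho_{f,p}$ has finite image, a group of order at most $|\GL_2(\OO_{K_{f,p}}/\p^{\,e(m-1)+1})|\le p^{4cm}$, and is unramified outside $S:=\{\ell\mid Np\}\cup\{\infty\}$; hence its character factors through $\Gal(L/\Q)$ for a number field $L$ unramified outside $S$ of degree bounded in terms of $p,c,m$. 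By Hermite--Minkowski there are finitely many such $L$, and for each only finitely many homomorphisms into $\GL_2$ of one of the finitely many finite rings of order $\le p^{cm}$; so only finitely many characters modulo $p^m$ occur.

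For $(3)\Rightarrow(1)$ I would assume $\mathbf{Rep}_2$ and $\mathbf{I}_2$. By Jochnowitz's theorem $\mathbf{Strong}_1$ (whence $\mathbf{Rep}_1$), the set of characters of the $\rhobar_{f,p}$ is finite; since the residue field of $K_{f,p}$ is generated over $\F_p$ by the values $a_\ell(f)\bmod\p$ with $\ell\nmid Np$, the residue degrees $r=r(f)$ are bounded, say $r\le r_0$. Next, the image $\overline{A_f}$ of $\Z_p[a_\ell(f)\mid\ell\nmid Np]$ in $\Zmod{p^2}$ is the subring generated by $\{a_\ell(f)\bmod p^2\mid\ell\nmid Np\}$, a set determined by the character of $\rho_{f,p}$ modulo $p^2$; by $\mathbf{Rep}_2$ there are finitely many possibilities for $\overline{A_f}$, each a finite ring of order at most some $C_1$. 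Finally $\mathbf{I}_2$ bounds $[\OO_{K_{f,p}}/\p^{\,e+1}:\overline{A_f}]$ by a constant $B$ (with $e=e_{K_{f,p}/\Q_p}$), so $p^{e+1}\le p^{r(e+1)}=|\OO_{K_{f,p}}/\p^{\,e+1}|\le BC_1$, which forces $e$ to be bounded; hence $[K_{f,p}:\Q_p]=er$ is bounded and $\mathbf{(B)}$ holds. Chaining $(1)\Rightarrow(2)\Rightarrow(3)\Rightarrow(1)$, the two forms of $(2)$ and $(3)$ being interchangeable by the first paragraph, finishes the proof. Conceptually this last implication is the heart of the matter — turning finiteness modulo $p^2$ plus an index bound into a bound on the degree — but once the residue degree (controlled by $\mathbf{Rep}_1$) and the ramification (controlled by $\mathbf{I}_2$) are separated it is short; the delicate part remains the prime-at-$p$ bookkeeping in the $\mathbf{Strong}_m\Leftrightarrow\mathbf{Rep}_m$ step.
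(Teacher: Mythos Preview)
Your cycle $(1)\Rightarrow(2)\Rightarrow(3)\Rightarrow(1)$ and the key computations (Hermite--Minkowski for $\mathbf{(B)}\Rightarrow\mathbf{Rep}_m$, and the cardinality estimate $\#C_2(f)=p^{r(e+1)}\le B\cdot C_1$ for $(3)\Rightarrow(1)$) match the paper's argument. The problem is the framing device in your first paragraph: you assert that $\mathbf{Strong}_m$ and $\mathbf{Rep}_m$ ``may be used interchangeably'' and sketch a proof of $\mathbf{Rep}_m\Rightarrow\mathbf{Strong}_m$. The paper explicitly leaves this implication open (see the discussion following Conjecture~\ref{conj:repr_m}), and your justification is incomplete. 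For $\ell\mid N$ the claim ``$a_\ell(f)$ is $0$ or a root of unity times a power of~$\ell$'' is not correct as stated: in the ramified principal series case one only has $a_\ell\cdot\overline{a_\ell}=\ell^{k-1}$, and $a_\ell/\ell^{(k-1)/2}$ need not be a root of unity of order bounded independently of~$f$ (it need not even be an algebraic integer). Your $\ell=p$ argument via Coleman theory is essentially Corollary~\ref{cor:strong_weight_bounds_bounded_slope}, so that part is fine, but it does not rescue the $\ell\mid N$ step.

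Crucially, this detour is unnecessary. The ``or'' in the theorem does not require $\mathbf{Rep}_m\Leftrightarrow\mathbf{Strong}_m$; it is handled by the trivial implication $\mathbf{Strong}_m\Rightarrow\mathbf{Rep}_m$ together with the two separate facts $\mathbf{(B)}\Rightarrow\mathbf{Strong}_m$ and $[\mathbf{Rep}_2\wedge\mathbf{I}_2]\Rightarrow\mathbf{(B)}$. For the first of these you should not route through $\mathbf{Rep}_m$: argue directly that $\mathbf{(B)}$ bounds $\#(O_{f,p}/\p^{e(m-1)+1})$, hence bounds the number of possible values of $a_\ell(f)\bmod p^m$ for \emph{every} prime~$\ell$ (bad ones included), which combined with $\mathbf{Rep}_m$ gives $\mathbf{Strong}_m$. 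This is exactly Proposition~\ref{prop:conj1+2}\eqref{item:6},\eqref{item:8}.

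A second, smaller gap: in your $(3)\Rightarrow(1)$ you derive the bound on residue degrees from Jochnowitz via ``the residue field of $K_{f,p}$ is generated over $\F_p$ by $a_\ell(f)\bmod\p$''. That statement is precisely the content of $\mathbf{I}_1$, which the paper notes is itself open. Fortunately your own inequality $p^{r(e+1)}\le BC_1$ already bounds $r$ (since $e+1\ge 2$), so the Jochnowitz step is redundant; just drop it and read off both $e$ and $r$ from that single bound, as the paper does.
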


In a previous version of this article, the following theorem was stated as a conjecture. We are indebted to Frank Calegari for explaining the proof to us, cf.\ the additional remarks below in subsection \ref{subsection:summary,N=1,p=2}.

\begin{thm}\label{thm:weak_weight_bounds} Assume $p \ge 5$. Then the statement $\mathbf{Weak}_m$ holds for any $m$, that is, there exists a constant $k = k(N,p,m)$ depending only on $N$, $p$, $m$, such that any strong eigenform modulo $p^m$ occurs in the weak sense at weight at most $k$ in the same level~$N$.
\end{thm}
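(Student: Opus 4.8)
\emph{Proof plan.} Write $w_0:=(p-1)p^{m-1}$. The plan is to bound the weight in two regimes — eigenforms whose $p$-th coefficient has small $p$-adic valuation, where bounded-slope $p$-adic families apply, and eigenforms with $\overline{a_p}=0$ — after a reduction to fixed residue classes of weights. For the reduction: since $p\ge 5$ one has $E_{p-1}\equiv 1\pmod p$ for the level-$1$ weight-$(p-1)$ Eisenstein series, and a short binomial estimate gives $E_{p-1}^{p^{m-1}}\equiv 1\pmod{p^m}$; thus $A:=E_{p-1}^{p^{m-1}}$ is a level-$1$ weight-$w_0$ form with $q$-expansion $\equiv 1\pmod{p^m}$. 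For $k\ge m+1$ the action of each $T_n$ on $q$-expansions in $\Zmod{p^m}[[q]]$ depends only on $k\bmod w_0$ (the terms $p^{k-1}$, $\ell^{k-1}$ in the usual formulas being either $\equiv 0\pmod{p^m}$ or of period dividing $w_0$), so multiplication by $A$ is a Hecke-equivariant inclusion $S_k(\Zmod{p^m})\hookrightarrow S_{k+w_0}(\Zmod{p^m})$, the identity on $q$-expansions, carrying strong eigenforms to strong eigenforms. Hence it suffices to bound, for each residue class $r$ modulo $w_0$, the weight needed to realise modulo $p^m$ the elements of $\fS(N)$ of weight $\equiv r$. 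So fix $f\in\fS(N)$ of weight $k\equiv r$, with $k$ above the bound to be produced (otherwise nothing to prove), and write $\overline f\in\fS_m(N)$.

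\emph{Case $v_p(a_p(f))\le m$.} Pass to level $\Gamma_1(N)\cap\Gamma_0(p)$ and let $f^{\circ}$ be the $p$-stabilisation of $f$ with $U_p$-eigenvalue of valuation $s\le m$; the other $U_p$-eigenvalue has valuation $k-1-s>m$ for $k$ large, so $f^{\circ}\equiv f\pmod{p^m}$ on $q$-expansions. By Coleman's theory of finite-slope families, the finiteness of the slope-$\le m$ locus of the tame-level-$N$ eigencurve over weight space (Coleman; Coleman--Mazur; Buzzard), and Coleman's classicality theorem, the slope-$\le m$ parts $e_{\le m}S_j$ of classical cusp forms of level $Np$ fit (for $j>m+1$) into a finite flat family over the relevant Iwasawa-type base $\Lambda$; so for $k'\equiv k\pmod{w_0}$, the congruence $[k]\equiv[k']\pmod{p^m}$ in weight space yields a Hecke-equivariant isomorphism $e_{\le m}S_k\otimes\Zmod{p^m}\cong e_{\le m}S_{k'}\otimes\Zmod{p^m}$. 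Taking $k'\equiv k\pmod{w_0}$ with $2m+2<k'\le 2m+2+w_0$, the image of $\overline{f^{\circ}}$ is a normalised weak eigenform $\overline{g^{\circ}}$ in $e_{\le m}S_{k'}(\Zmod{p^m})$ of level $Np$ with the same eigenvalues modulo $p^m$, hence with $q$-expansion $\overline f$. A form new at $p$ of weight $k'>2m+2$ has $U_p$-slope $(k'-2)/2>m$, so this slope-$\le m$ form is old at $p$: writing $\overline{g^{\circ}}=\overline g-\overline\beta\,\overline g|V$ with $\overline g\in S_{k'}(N,\Zmod{p^m})$ and $v_p(\beta)=k'-1-s>m$ gives $\overline\beta=0$, so $\overline g=\overline{g^{\circ}}$ already lies in $S_{k'}(N,\Zmod{p^m})$. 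Thus $\overline f$ occurs weakly in weight $k'\le 2m+2+w_0$, a bound depending only on $N$, $p$, $m$.

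\emph{Case $\overline{a_p(f)}=0$ (the main obstacle).} If $v_p(a_p(f))\ge m+1$, both $p$-stabilisations have slope $>m$ and the argument above breaks down. Here the recursion for $a_{p^j}(f)$ together with $k\ge m+1$ shows that $\overline f$ is supported on integers prime to $p$, equivalently $\overline f=\theta^{w_0}(\overline f)$ for $\theta=q\,d/dq$, and the $p$-depleted form is a Serre $p$-adic eigenform of level $N$ and weight $k$, killed by $U_p$, still reducing to $\overline f$ modulo $p^m$. My plan for this case is to combine the theta operator with the mod-$p$ picture: reducing $\overline f$ modulo $p$ produces a mod-$p$ eigenform of level $N$, of which by Jochnowitz there are only finitely many, each occurring in weight $\le C(N,p)$. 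The hard part — the real content of the theorem — will be to lift this weight bound from modulo $p$ to modulo $p^m$: one wants an analogue modulo $p^m$ of Tate's control of filtrations along a $\theta$-cycle, which reduces the question to the minimal weight along the single cycle of $\overline f$, and here one must use crucially that $\mathbf{Weak}_m$ asks only for a \emph{weak} eigenform in low weight (so the witnessing form need not be a reduction of a classical form). Making this passage from modulo $p$ to modulo $p^m$ effective and uniform in the weight is where I expect the essential difficulty; everything else is the bookkeeping above. The hypothesis $p\ge 5$ is used in the congruence $E_{p-1}\equiv 1\pmod p$, in Coleman's theory as quoted, and in the structure of modular forms modulo $p$ underpinning the last case.
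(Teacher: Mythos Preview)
Your Case~1 (bounded slope) is in the right spirit and corresponds to the paper's Corollary~\ref{cor:strong_weight_bounds_bounded_slope}, though your formulation of the mod-$p^m$ isomorphism of slope-$\le m$ parts is a Gouv\^ea--Mazur type statement that is known to fail as stated; one should invoke local constancy of dimensions (Coleman/Wan) instead. But this is not the main issue.

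The genuine gap is Case~2. You explicitly call it ``the hard part --- the real content of the theorem'' and offer only a plan: lift Jochnowitz's mod-$p$ weight bound via an analogue of Tate's $\theta$-cycle/filtration control modulo~$p^m$. You do not carry this out, and there is good reason to doubt it can be made to work along these lines. The Serre--Jochnowitz theory of filtrations modulo~$p$ rests on the graded-ring structure of mod-$p$ modular forms (via the Hasse invariant), and no comparable structure theorem is available modulo~$p^m$; the paper itself remarks that the $\theta$-operator modulo~$p^m$ ``behaves in a much more complicated way when $m\ge 2$''.

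The paper's proof (due to Calegari) fills this gap with a completely different idea that bypasses the slope dichotomy for the main step. One passes to level $\Gamma:=\Gamma_1(N)\cap\Gamma(p^m)$ and to group cohomology $H^1(\Gamma,V_{k-2})$ via Eichler--Shimura. The key observation is that at level $\Gamma(p^m)$ the double-coset representatives $\delta_j$ for $T_\ell$ ($\ell\ne p$) are all $\equiv\mathrm{diag}(\ell,1)\pmod{p^m}$, so as a module over the $p$-deprived Hecke algebra one has
\[
H^1(\Gamma, V_w\otimes\Z/p^m\Z)\;\cong\;\bigoplus_{i=0}^{w} H^1(\Gamma,\Z/p^m\Z)(i),
\]
where $(i)$ is a Tate-type twist of the Hecke action. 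Since only $\tilde\varphi(p^m)$ distinct twists occur, the $p$-deprived Hecke algebra acting on the whole of $H\otimes\Z/p^m\Z$ factors through the Hecke algebra of the single finite module $\bigoplus_{i=0}^{\tilde\varphi(p^m)-1} H^1(\Gamma,\Z/p^m\Z)(i)$, hence is finite. Every eigenform~$f$, \emph{regardless of slope}, then gives a ring homomorphism from this finite algebra, which therefore already factors through some $\TT'_{\le r}$ with $r$ bounded in terms of $N,p,m$ only; this yields a weak eigenform in weight $\le r$ agreeing with $f$ away from~$p$. Only after this does the paper invoke the bounded-slope result and the $\theta$-operator, and only to adjust the single coefficient~$a_p$: one first reduces to $a_p\equiv 0\pmod{p^m}$, then applies $\theta^m$ to the low-weight witness to force its $p$-th coefficient to vanish at bounded weight cost.

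So the architecture is inverted relative to your outline: what you treat as the easy preliminary (bounded slope) is in the paper merely a final fix-up for $a_p$, and the case you leave open is precisely where the new idea --- the finiteness of the Hecke algebra via the $\Gamma(p^m)$ cohomology decomposition --- does all the work.
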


\begin{rem}\label{rem:weak_weight_bounds}
Without the assumption $p\ge 5$, the following slightly weaker statement is true: there exists a constant $k' = k'(N,p,m)$ depending only on $N$, $p$, $m$, such that for any strong eigenform~$f$ modulo $p^m$ there is a weak eigenform~$g$ mod~$p^m$ at weight at most $k$ in the same level~$N$ such that $f$ and $g$ agree at all coefficients of index prime to~$p$.

The only reason we need the assumption $p\ge 5$ occurs in the very last step of the proof where we need a ``weight bound'' in connection with the application of the $\theta$ operator modulo $p^m$. Such a bound is provided by \cite{ck} that avoids a discussion of the cases $p\in\{ 2,3\}$. Without having worked out the details it is however fairly clear to us that such bounds should exist for all $p$ and that Theorem \ref{thm:weak_weight_bounds} holds without any restriction on $p$.
\end{rem}

As it is immediately clear from definitions that the two conditions $\mathbf{Weak}_m$ and $\mathbf{Fin}_m$ together imply $\mathbf{Strong}_m$, we thus have the equivalence of Conjectures \ref{conj:strong_weight_bounds} and \ref{conj:finitely_many_strong_at_fixed_weight} as a consequence of Theorem \ref{thm:weak_weight_bounds}.

\begin{cor} Conjectures \ref{conj:strong_weight_bounds} and \ref{conj:finitely_many_strong_at_fixed_weight} are equivalent.
\end{cor}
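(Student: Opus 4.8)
The plan is to prove the two implications between Conjectures~\ref{conj:strong_weight_bounds} and~\ref{conj:finitely_many_strong_at_fixed_weight} separately; one of them is immediate, and the other is a formal consequence of the weight bound of Theorem~\ref{thm:weak_weight_bounds}. First I would dispose of the direction Conjecture~\ref{conj:strong_weight_bounds}~$\Rightarrow$~Conjecture~\ref{conj:finitely_many_strong_at_fixed_weight}: fix $m$ and assume $\mathbf{Strong}_m$, i.e.\ that $\fS_m(N)$ is finite. Then for \emph{every} $k$ the set $\fS_m(N)\cap S_{\le k}(\Zmod{p^m})$ is a subset of the finite set $\fS_m(N)$, hence finite, which is precisely $\mathbf{Fin}_m$. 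As $m$ is arbitrary this gives Conjecture~\ref{conj:finitely_many_strong_at_fixed_weight}, with no work required.

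For the converse I would invoke Theorem~\ref{thm:weak_weight_bounds}. Assume $\mathbf{Fin}_m$ for all $m$, fix $m$, and suppose first $p\ge 5$. By Theorem~\ref{thm:weak_weight_bounds} the statement $\mathbf{Weak}_m$ holds, so there is a constant $k=k(N,p,m)$ with $\fS_m(N)\subseteq S_{\le k}(\Zmod{p^m})$. For this particular $k$ the containment forces $\fS_m(N)=\fS_m(N)\cap S_{\le k}(\Zmod{p^m})$, and the right-hand side is finite by the instance of $\mathbf{Fin}_m$ attached to that $k$. Hence $\fS_m(N)$ is finite, i.e.\ $\mathbf{Strong}_m$ holds, and letting $m$ vary yields Conjecture~\ref{conj:strong_weight_bounds}. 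For $p\in\{2,3\}$ the same argument applies once one has the full statement $\mathbf{Weak}_m$ (the containment in a fixed finite-weight space), as expected in Remark~\ref{rem:weak_weight_bounds}; note that the partial matching statement actually proved there does not by itself suffice, since it only places a strong eigenform next to \emph{some} weak eigenform of bounded weight, of which there may be infinitely many (Calegari--Emerton), and one cannot conclude finiteness without intersecting with a fixed finite-weight space so as to apply $\mathbf{Fin}_m$.

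The corollary carries no genuine obstacle of its own: it is the purely formal observation that $\mathbf{Weak}_m$ together with $\mathbf{Fin}_m$ forces $\mathbf{Strong}_m$, combined with the fact --- established in Theorem~\ref{thm:weak_weight_bounds} --- that strong eigenforms modulo $p^m$ in level $N$ live in a bounded weight range. All the difficulty is concentrated in Theorem~\ref{thm:weak_weight_bounds}, whose proof passes through the $\theta$ operator modulo $p^m$ and the weight bounds of~\cite{ck}; once that input is granted, the two a priori different finiteness conjectures simply collapse into one.
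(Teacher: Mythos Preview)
Your proof is correct and follows essentially the same approach as the paper: the forward implication $\mathbf{Strong}_m \Rightarrow \mathbf{Fin}_m$ is trivial, and the converse is the formal observation that $\mathbf{Weak}_m$ (supplied by Theorem~\ref{thm:weak_weight_bounds}) together with $\mathbf{Fin}_m$ forces $\mathbf{Strong}_m$. You are in fact more careful than the paper in flagging that Theorem~\ref{thm:weak_weight_bounds} is only proved for $p\ge 5$, so that for $p\in\{2,3\}$ the corollary rests on the expectation voiced in Remark~\ref{rem:weak_weight_bounds} rather than on a proved statement; the paper glosses over this.
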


\subsection{Strong eigenforms and characters.}
We stress that by Definition~\ref{definition:basic_definitions} strong eigenforms modulo~$p^m$ are equal if their $q$-expansions are equal. In particular, the Fourier coefficients at the ``bad primes'' (i.e., those dividing $Np$) of equal strong eigenforms modulo~$p^m$ are equal. We wish to formulate the following conjecture:

\begin{conj}\label{conj:repr_m} Statement $\mathbf{Rep}_m$ holds for all $m$.
\end{conj}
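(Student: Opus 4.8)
The plan is to deduce $\mathbf{Rep}_m$ from Buzzard's condition $\mathbf{(B)}$, and then --- honestly --- to identify $\mathbf{(B)}$ itself as the genuine obstacle. I will also explain why the weight bound of Theorem~\ref{thm:weak_weight_bounds}, which removes the \emph{a priori} unboundedness of weights, does not by itself suffice.

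First, the implication $\mathbf{(B)}\Rightarrow\mathbf{Rep}_m$. Fix $f\in\fS(N)$ and put $S=\{\ell\mid Np\}\cup\{\infty\}$. The character of $\rho_{f,p}$ is the trace of the attached $2$-dimensional pseudo-character of $\Gal(\Q_S/\Q)$; being continuous it takes values in $\OO_{K_{f,p}}$, and its determinant equals $\varepsilon_f\chi_p^{k-1}$ with $\varepsilon_f$ of order dividing $\varphi(N)$ and $\chi_p$ the cyclotomic character, so the reduction mod $p^m$ of the determinant already belongs to a finite set of possibilities (the weight $k$ enters only through its residue modulo the exponent of $(\Z/p^m)^\times$). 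Now $\mathbf{(B)}$ bounds $[K_{f,p}:\Q_p]$, and since $\Q_p$ has only finitely many extensions of each given degree, $K_{f,p}$ lies, as a subfield of $\Qbar_p$, in a finite set $\{L_1,\dots,L_r\}$ depending only on $N,p,m$. Hence the reduced pseudo-character is valued in one of the finitely many subrings of $\Zmod{p^m}$ that are images of the $\OO_{L_j}$, each of bounded cardinality; since $\Gal(\Q_S/\Q)$ is topologically finitely generated and the target is finite, only finitely many such pseudo-characters occur. This gives $\mathbf{Rep}_m$. (Alternatively one may simply quote Theorem~\ref{thm:finiteness_conditions}, by which $\mathbf{(B)}$ implies even $\mathbf{Strong}_m$.)

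Why is bounding the weight not enough? By Jochnowitz's theorem \cite{jochnowitz_finiteness}, $\mathbf{Strong}_1$ holds, so there are finitely many residual characters; fix one. By Brauer--Nesbitt and Chebotarev density the character of $\rho_{f,p}$ mod $p^m$ is determined by the $a_\ell(f)\bmod p^m$ with $\ell\nmid Np$, and by Theorem~\ref{thm:weak_weight_bounds} (with Remark~\ref{rem:weak_weight_bounds} for $p\in\{2,3\}$) these coincide with the corresponding coefficients of a weak eigenform of level $N$ and weight at most $k_0(N,p,m)$. Thus $\mathbf{Rep}_m$ is equivalent to the assertion that only finitely many systems $(a_\ell\bmod p^m)_{\ell\nmid Np}$ arise in this way from weak eigenforms of bounded weight. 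But one cannot drop the requirement that the system come from a newform: as recalled in the excerpt, Calegari and Emerton exhibited infinitely many weak eigenforms inside a single space $S_k(\Zmod{p^2})$. So the weight reduction disposes of the unboundedness of weights but leaves open exactly the separation of the ``newform-related'' weak eigenforms from the rest --- which, via the dictionary of Theorem~\ref{thm:finiteness_conditions}, is again governed by how ramified the fields $K_{f,p}$ can be.

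The main obstacle, then, is a uniform bound on the ramification index $e_{K_{f,p}/\Q_p}$ (equivalently, since the residue degrees are bounded by Jochnowitz, on $[K_{f,p}:\Q_p]$): without it, $K_{f,p}$ may be wildly ramified and run, as a subfield of $\Qbar_p$, through an infinite family, so that the reductions mod $p^m$ of the characters range over infinitely many subrings of $\Zmod{p^m}$ --- already for $m=2$ there are infinitely many pairwise unequal subrings of the shape $\OO_L/\p_L^{\bullet}$ with $e_L\ge 2$ inside $\Zmod{p^2}$, since the image of a uniformizer of $L$ may be perturbed within a fixed valuation range. This is exactly Buzzard's question $\mathbf{(B)}$ (\cite{buzzard}, Question~4.4), which is implied by Conjecture~1.1 of \cite{calegari_emerton_large_index} but is otherwise open; the only conceivable way around it would be to exploit the genuinely weaker combination of $\mathbf{Rep}_m$ with the index condition $\mathbf{I}_m$, and this too is unresolved. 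This is why we state $\mathbf{Rep}_m$ only as a conjecture.
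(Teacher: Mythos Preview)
The statement you were asked to address is a \emph{conjecture} in the paper, not a theorem; the paper offers no proof and indeed explains that it is open. You correctly recognise this and, rather than proving the statement, you outline the conditional implication $\mathbf{(B)}\Rightarrow\mathbf{Rep}_m$ and then identify $\mathbf{(B)}$ as the true obstacle. That overall shape matches the paper's own discussion (cf.\ Proposition~\ref{prop:conj1+2}\eqref{item:5} and the surrounding commentary). Your argument for $\mathbf{(B)}\Rightarrow\mathbf{Rep}_m$ via topological finite generation of $\Gal(\Q_S/\Q)$ is a legitimate alternative to the paper's Hermite--Minkowski approach, and both reach the same conclusion.

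There is one factual slip in your analysis of the obstacle. You write that the residue degrees $\textrm{res}_{f,p}$ are ``bounded by Jochnowitz'', and use this to recast the obstruction as a bound on the ramification index alone. This is not known. Jochnowitz's theorem gives $\mathbf{Strong}_1$, which bounds the cardinality of the subring $B_1(f)=\Z/(p)[a_n(f)\bmod p]$ inside the residue field $C_1(f)=O_{f,p}/\p_{f,p}$; it does \emph{not} bound $\#C_1(f)=p^{\textrm{res}_{f,p}}$ itself. The paper is explicit about this: boundedness of the residue degrees is equivalent to $\mathbf{I}_1$, and ``it seems to be an open question whether this actually holds'' (see the end of \S\ref{subsection:I_m}). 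So both the residue degree and the ramification index remain genuine parts of the obstruction, and your reduction of the problem to controlling ramification alone is not justified.
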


It is clear that this conjecture is weaker than Conjecture \ref{conj:strong_weight_bounds}.
We think that it is an interesting question whether they are in fact equivalent. We have not attempted to answer this question. In fact, the (necessarily continuous) character of a Galois representation modulo~$p^m$ is uniquely determined by the images of $\Frob_\ell$ for primes~$\ell$ in any density-one set of primes, hence it does not give any information (a priori) on the coefficients at ``bad primes'' of the strong eigenforms modulo~$p^m$ admitting this representation.

We would like to add a word of explanation why we chose the set $\fR_m(N)$ the way we did: ultimately we would like to understand the set of isomorphism classes of strongly modular Galois representations modulo~$p^m$; if its residual (i.e., the mod~$p$) representation is absolutely irreducible, the isomorphism class of the representation modulo~$p^m$ is uniquely determined by its character, cf.\ \cite[Th\'{e}or\`{e}me 1]{carayol}; however, this does not necessarily hold if the residual representation is not absolutely irreducible. Our choice of the set $\fR_m(N)$ is thus explained by our desire not to enter into details about the possible reductions modulo~$p^m$ of Galois representations which are not residually absolutely irreducible.

\subsection{Reformulations in terms of weight bounds.}\label{subsection:weight_bounds}
Let $f$ be an eigenform modulo $p^m$, either strong or weak. We say that $f$ occurs strongly resp.\ weakly at a specific weight $k_0$ if there is a strong resp.\ weak eigenform in $S_{k_0}(\Gamma_1(N),\Zmod{p^m})$ equal to~$f$.

Thus, Conjecture~\ref{conj:strong_weight_bounds} can now equivalently be formulated as the statement that there be a constant $\kappa(N,p,m)$ depending only on $N$, $p$, $m$ such that any strong eigenform modulo $p^m$ on $\Gamma_1(N)$ occurs strongly at weight $\le \kappa(N,p,m)$. We call such a constant -- if it exists -- a {\it strong weight bound} for strong eigenforms modulo $p^m$ on $\Gamma_1(N)$. Similarly, the constant from Theorem~\ref{thm:weak_weight_bounds} is called a {\it weak weight bound} for strong eigenforms modulo $p^m$ on $\Gamma_1(N)$. Hence, Conjecture~\ref{conj:strong_weight_bounds} and Theorem~\ref{thm:weak_weight_bounds} state the existence for any $m$ of a strong resp.\ weak weight bound for strong eigenforms modulo $p^m$ on $\Gamma_1(N)$.

One can ask whether a stronger form of Theorem~\ref{thm:weak_weight_bounds} holds: does any weak eigenform modulo $p^m$ occur weakly at some weight bounded by a function of $N$, $p$, $m$? However, in this paper we chose not to consider this strengthening.

\subsection{Relation between strong and weak eigenforms mod \texorpdfstring{$p^m$}{pm} and between Conjecture \ref{conj:strong_weight_bounds} and Theorem \ref{thm:weak_weight_bounds}.}
In \cite{ckw} two of us with Imin Chen introduced the notions of strong and weak eigenforms modulo $p^m$ with a slightly different definition. We include in section~\ref{section:weak_not_strong} an explicit example showing that weak eigenforms are not necessarily strong (at any weight). This is not the only indication that Conjecture \ref{conj:strong_weight_bounds} probably does not follow from Theorem \ref{thm:weak_weight_bounds} in any immediately obvious way. Even more serious is the fact that there may exist infinitely many weak eigenforms modulo $p^m$ at some fixed weight, as discovered by Calegari and Emerton, see section~\ref{section:abundance_weak}.

\subsection{The special case \texorpdfstring{$N=1$}{N=1} and \texorpdfstring{$p=2$}{p=2}.}\label{subsection:summary,N=1,p=2} Given Theorem \ref{thm:weak_weight_bounds} above, it is natural to ask about the form of the constant $k(N,p,m)$. In the very special case where $N=1$, $p=2$, and with the further restriction that there be no ramification in the coefficient fields, we shall show that one can use the theory of Serre and Nicolas to obtain explicit bounds. The relevant theorem is Theorem \ref{thm:weightbound}, see also Remark \ref{remark:thm_N=1,p=2} below. We do not obtain ``formulas'' for the weight bounds in question, merely a method for computing such. We illustrate the method with the computation of explicit bounds for the cases $m\le 4$ at the end of subsection \ref{subsection:weak weight bounds mod 2^m}.

As Frank Calegari explained to us, the structure of the core inductive step in the proof of Theorem \ref{thm:weightbound} can be reinterpreted in such a way that it is susceptible to generalization by using a decisive input from \cite{calegari_emerton_large_index}, specifically \cite[Theorem 2.2]{calegari_emerton_large_index}. He then showed us a sketch of Theorem \ref{thm:weak_weight_bounds} above. The proof of Theorem \ref{thm:weak_weight_bounds} that we give is a slight variation of the sketch that we owe to Calegari, in particular we shall work with group cohomology rather than the cohomology of modular curves as in \cite{calegari_emerton_large_index}.

As Theorem \ref{thm:weightbound} is by now superseded by Theorem \ref{thm:weak_weight_bounds}, we have chosen to skip some of the details of its proof. For full details, the interested reader is referred to the preprint version of this paper, see \cite{krw_preprint}.

\subsection{Numerical data.} In section~\ref{subsection:numerical_data} we provide a bit of numerical data pertaining to this question of how the constants $k(N,p,m)$ of Theorem \ref{thm:weak_weight_bounds} depend on $N$, $p$, $m$. Our data set is not sufficiently large to warrant any conjectures about the optimal shape of the constants $k(N,p,m)$. However, the data do raise the interesting question of whether these constants can be chosen so as to be independent of $N$.

In the spirit of Serre's Modularity Conjecture, it seems reasonable to ask whether a weight bound can be derived purely locally at~$p$, in which case the independence of~$N$ would be clear. One could for instance try to classify the reductions modulo~$p^m$ of all $2$-dimensional crystalline representations of the absolute Galois group of~$\Q_p$, and to prove that all of them can be obtained as reductions coming from bounded weight. For $m=1$, this is done in \cite{BhGh} for slopes between $1$ and~$2$, in \cite{BhGhRo} for slope~$1$ and in \cite{BuGe} for slopes less than~$1$. Moreover, one might then hope to have some local-global compatibility modulo~$p^m$ in order to obtain a weak eigenform (a global object) in the minimal weight predicted locally. For the authors, this appears only as a vague speculation at the moment. If one had classification results of reductions modulo~$p^m$ of crystalline representations for given slope (or range of slopes) like those cited above for $m=1$, one might be able to check whether such a local-global compatibility could hold, and possibly make precise predictions for weight bounds. For this, also see the next paragraph.

\subsection{The finiteness conjectures for finite slope.} One test of Conjecture \ref{conj:strong_weight_bounds} is to ask whether it holds if we restrict the eigenforms in some way. If one restricts to eigenforms with a fixed, finite $p$-slope, the finiteness statement of Conjecture~\ref{conj:strong_weight_bounds} becomes true, but then additional questions arise in the direction of a more precise, ``quantitative'' version of Conjecture~\ref{conj:strong_weight_bounds}. We will discuss this briefly in section~\ref{subsection:weight_bounds_fixed_slope}.

\subsection{Motivation}\label{subsection:motivation} We now explain the motivation underlying this work by first considering the ``level'' aspect before treating the ``weight'' one, which is the focus of this paper. Let $q \mid N$ be a prime different from~$p$. Ribet's famous theory of ``level lowering'', which is a fundamental input in the proof of Fermat's Last Theorem and of Serre's Modularity Conjecture, translates statements on the structure of $\rhobar_{f,p}|G_q$, with $G_q = G_{\Q_q} = \Gal(\Qbar_q/\Q_q)$, into a congruence modulo~$p$ of $f$ and some other Hecke eigenform at level $N/q$; for instance, if $q \mid\mid N$, then $\rhobar_{f,p}|G_q$ is unramified at~$q$ if and only if such a congruence exists. In the quest of determining (computationally or theoretically) the structure of $\rho_{f,p}|G_q$ it is thus most natural to relate statements on $\rho_{f,p,m}|G_q$ to statements on congruences modulo~$p^m$ of Hecke eigenforms. The underlying theory of ``level lowering modulo~$p^m$'' has to some extent been developed, especially by Dummigan~\cite{dummigan} and also by Tsaknias~\cite{tsaknias}, but there are still many open cases. For an application of level lowering modulo higher powers of $p$ to Diophantine problems, see \cite{dahmen_yazdani}.

We now turn our attention to weights. By the weight aspect of Serre's Modularity Conjecture, i.e., the theorem of Khare and Wintenberger, there is a {\em minimal weight} determined by the restriction $\rhobar_{f,p}|G_p$ (even by the restriction to the inertia group at~$p$) such that in that weight there is a Hecke eigenform~$g$ of the same level as~$f$ such that $f$ and $g$ are congruent modulo~$p$; conversely, such a congruence determines the shape of $\rhobar_{f,p}|G_p$. It is thus natural to approach the study of $\rho_{f,p}|G_p$ through approximations modulo~$p^m$ on the modular side, i.e., through congruences modulo~$p^m$ with forms in ``low'' weights.

Finally it is worthwhile to mention the question of the existence and number theoretic meaning of companion forms modulo~$p^m$ because it is also situated in the spirit of weights modulo~$p^m$ (see \cite{AdMa}, that, however, is restricted to ordinary forms and coefficients unramified at~$p$).

\section{Proofs of the theorems}\label{section:buzzard} Before beginning the proof of Theorem \ref{thm:finiteness_conditions} we first make some initial observations concerning coefficient fields and modular Galois representations modulo $p^m$. We also introduce the finiteness statement $\mathbf{I}_m$ in detail.

\subsection{Coefficient fields} For $f\in\fS(N)$ we have the coefficient field
$$
K_f := \Q(a_n(f) \mid ~ n\in\N ),
$$
which is a finite extension of $\Q$. For $D\in\N$ let
$$
K_f^{(D)} := \Q(a_{\ell}(f) \mid ~\mbox{$\ell$ prime with $\ell\nmid ND$} ).
$$

The statement of the following lemma is well-known, but we include the short proof for lack of a precise reference.

\begin{lem}\label{lem:coeffs_ND}
In the above setting we have $K_f^{(D)} = K_f$ for any $D\in\N$.

Consequently, for the $p$-adic coefficient field
$$
K_{f,p} = \Q_p ( a_{\ell}(f) \mid ~ \mbox{$\ell$ prime with $\ell\nmid Np$}),
$$
we have that $K_{f,p} = \Q_p ( a_n(f) \mid ~ n\in\N )$ for $f\in\fS(N)$.
\end{lem}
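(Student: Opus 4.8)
The plan is to show that adjoining to $\Q$ the Hecke eigenvalues $a_\ell(f)$ for primes $\ell \nmid ND$ already recovers the full coefficient field $K_f = \Q(a_n(f) : n \in \N)$; the statement about $K_{f,p}$ then follows by base change to $\Q_p$. Since $f \in \fS(N)$ is a normalized newform, the Fourier coefficients satisfy the standard Hecke multiplicativity relations: $a_{mn}(f) = a_m(f)a_n(f)$ for $(m,n)=1$, and $a_{\ell^{r+1}}(f) = a_\ell(f)a_{\ell^r}(f) - \ell^{k-1}\chi(\ell) a_{\ell^{r-1}}(f)$ for primes $\ell \nmid N$ (with the appropriate modification $a_{\ell^{r+1}}(f) = a_\ell(f)^{r+1}$ for $\ell \mid N$). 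So $K_f$ is generated over $\Q$ by the $a_\ell(f)$ ranging over \emph{all} primes $\ell$, together with the nebentypus values and the weight datum — but in fact the recursion shows $K_f = \Q(a_\ell(f) : \ell \text{ prime})$, because the $\ell^{k-1}\chi(\ell)$ appearing in the recursion lies in the field generated by the $a_\ell$'s (one can see this, e.g., from the fact that $\chi$ takes values that are roots of unity expressible via the $a_\ell$ through the relation at primes, or more cleanly: the coefficient field of a newform is well known to be $\Q(a_n : n)$ and the recursion only ever needs finitely many earlier terms). Thus the only issue is passing from \emph{all} primes to primes away from a fixed finite set $\{\ell : \ell \mid ND\}$.

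The key input for discarding finitely many primes is strong multiplicity one together with the Galois-representation / $L$-function perspective: a newform $f \in \fS(N)$ is determined by its system of Hecke eigenvalues at almost all primes, and more to the point, the \emph{field} $K_f^{(D)} = \Q(a_\ell(f) : \ell \nmid ND)$ does not shrink when we throw away the finitely many $\ell \mid ND$. I would argue this as follows. Let $K' := K_f^{(D)}$; clearly $K' \subseteq K_f$. To prove the reverse inclusion it suffices to show $a_q(f) \in K'$ for each prime $q \mid ND$ (the full field $K_f$ is then recovered by the multiplicativity/recursion relations above, all of whose other ingredients — products and the terms $\ell^{k-1}\chi(\ell)$ at good $\ell$ — already live in $K'$; note $\chi(\ell)$ for $\ell \nmid N$ is a ratio of elements of $K'$ since $\ell^{k-1}\chi(\ell) = a_\ell(f)^2 - a_{\ell^2}(f)$ and $a_{\ell^2}(f) \in K'$ by the recursion once we know $a_\ell(f) \in K'$, so the weight $k$ — a fixed integer — and hence $\ell^{k-1}$ cause no trouble, giving $\chi(\ell) \in K'$, hence all of $\chi$ since Dirichlet characters mod $N$ are determined by their values at primes not dividing $N$).

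To get $a_q(f) \in K'$ for $q \mid ND$: consider the Galois representation $\rho_{f,p}$ (for any auxiliary prime $p$) or, more elementarily, use that $K_f$ is a number field and $\mathrm{Aut}(K_f/K')$ acts on $f$ by acting on its coefficients, sending $f$ to a newform $f^\sigma \in \fS(N)$ with $a_\ell(f^\sigma) = \sigma(a_\ell(f)) = a_\ell(f)$ for all $\ell \nmid ND$. By strong multiplicity one (for newforms on $\Gamma_1(N)$, two newforms of the same level agreeing at almost all primes are equal), $f^\sigma = f$, hence $\sigma(a_n(f)) = a_n(f)$ for all $n$, so $\sigma$ fixes $K_f$; since this holds for all $\sigma \in \mathrm{Aut}(K_f/K')$ and $K_f/K'$ is a finite extension of number fields, we conclude $K' = K_f$ (taking the Galois closure if necessary, or simply noting that a finite extension with trivial automorphism group over the base relative to a normal closure forces equality). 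Finally, tensoring the equality $K_f^{(D)} = K_f$ with $\Q_p$ over $\Q$ and using our fixed embedding $\Qbar \hookrightarrow \Qbar_p$ gives $\Q_p(a_\ell(f) : \ell \nmid Np) = \Q_p(a_n(f) : n \in \N)$, which is the second assertion. The main obstacle — really the only point requiring care — is the justification that the Galois action on coefficients preserves the newform structure and level, so that strong multiplicity one applies; this is standard (the Galois conjugate of a newform is a newform of the same level and weight with conjugate nebentypus), but it is the crux of the argument and the reason the lemma is true.
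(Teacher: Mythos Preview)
Your approach is essentially the same as the paper's: act on the coefficients of $f$ by an automorphism fixing $K_f^{(D)}$, observe that the result is again a newform on $\Gamma_1(N)$ agreeing with $f$ at almost all primes, invoke strong multiplicity one to conclude $f^\sigma = f$, and deduce that every $a_n(f)$ is fixed. The paper takes $\sigma \in \Gal(\Qbar/K_f^{(D)})$ rather than $\sigma \in \Aut(K_f/K')$, which cleanly sidesteps the normality issue you flag in your parenthetical (triviality of $\Aut(K_f/K')$ does not by itself force $K_f = K'$), and it also makes your extended discussion of the Hecke recursions and the character values unnecessary: once $\sigma f = f$ you get $\sigma(a_n) = a_n$ for \emph{all}~$n$ at once, hence $K_f \subseteq K_f^{(D)}$ directly.
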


\begin{proof} The second statement clearly follows from the first. To prove the first statement, since $f$ is an eigenform, it suffices to prove that $a_q\in K_f^{(D)}$ for any prime $q$ dividing~$ND$. Let $q$ be such a prime and let $\sigma \in \Gal(\Qbar/K_f^{(D)})$. Now,
$$
\sigma f := \sum_n \sigma(a_n) q^n
$$
is again an newform on $\Gamma_1(N)$ with nebentypus $\sigma \epsilon$ if $\epsilon$ is the nebentypus of $f$, cf.\ \cite[Proposition 2.7]{deligne-serre}.

If $\ell$ is any prime not dividing $ND$ we have $\sigma a_{\ell} = a_{\ell}$. As $f$ and $\sigma f$ are eigenforms for all Hecke operators, this implies that $\sigma a_t = a_t$ for all $t\in\N$ with $(t,ND) = 1$. By Multiplicity One, see specifically \cite[Theorem 4.6.19]{miyake}, we can conclude that $\sigma f = f$ whence in  particular that $\sigma a_q = a_q$.

Since this holds for any $\sigma \in \Gal(\Qbar/K_f^{(D)})$, we must have $a_q\in K_f^{(D)}$.
\end{proof}

Another consequence of the Lemma is that for $f\in\fS(N)$ all character values $\epsilon(d)$, where $\epsilon$ is the Dirichlet character/nebentype of~$f$, lie in the field $K_f^{(D)}$: by \cite[Corollary 3.1]{ribet_nebentypus}, one knows that they are all in $K_f$, and by Lemma \ref{lem:coeffs_ND} we have $K_f = K_f^{(D)}$.

\begin{rem} Lemma \ref{lem:coeffs_ND} is false in general if we just assume that $f$ is an eigenform. A concrete counterexample is as follows. Consider $f:=\Delta$, the unique form of weight $12$ and level $1$. At level $3$, $f$ gives rise to two oldforms, $f$ and $g:=f(q^3)$. If one computes the action of $U_3$ (the Hecke operator corresponding to the prime $3$ at level $3$) on the basis $f,g$ of the space of oldforms, one finds that it is given by the matrix $\abcd{a_3}{-3^{11}}{1}{0}$ where $a_3 = a_3(f) = 252$. The characteristic polynomial of this is $x^2 - 252x + 3^{11}$ that is irreducible over $\Q$. Let $\gamma$,$\gamma'$ be the roots. Then $f - \gamma' g$ is a normalized eigenform with the property that the $T_{\ell}$-eigenvalues are in $\Q$ for all primes $\ell\neq 3$, whereas the $U_3$-eigenvalue is $\gamma$ that is not in $\Q$, but rather in a quadratic extension.

This examples reflects a general phenomenon: suppose that our level $N$ has form $N=M\ell^r$ where $\ell$ is prime, and suppose that $f$ is a newform at level $M$. Then $f$ gives rise to oldforms $f(q),f(q^{\ell}),\ldots,f(q^{\ell^r})$, and one can easily and explicitly compute the action of the level $N$ Hecke operator $U_{\ell}$ on the span of these oldforms, see e.g.\ \cite[Proposition 4]{wiese_dihedral}. One finds that its characteristic polynomial equals $(x^2 - a_\ell(f) x + \delta\epsilon(\ell) \ell^{k-1})\cdot x^{r-1}$ where $\epsilon$ is the nebentypus, $k$ is the weight, $\delta = 1$ if $\ell \nmid M$, and $\delta = 0$ otherwise. This proves that the $\ell$-th coefficient of any eigenform in this span lies in an at most quadratic extension of $\Q(a_\ell(f))$.

Using the commutativity of the Hecke operators, we obtain that the field of coefficients of any eigenform is the composite of the coefficient field of the underlying newform with an at most quadratic extension for each prime dividing the level.

An implication of this is the following. If we had defined the set $\fS(N)$ using eigenforms rather than newforms, then the resulting condition $\mathbf{(B)}$ would be equivalent to the condition $\mathbf{(B)}$ that we formulated in \ref{subsection:finiteness_conditions} above: this is because any $p$-adic field admits only finitely many quadratic extensions.

Because of some technicalities in the proof of Theorem \ref{thm:finiteness_conditions}, specifically relating to the finiteness condition $\mathbf{I}_m$, we are confining ourselves to newforms at least as far as that theorem is concerned. In later parts of the paper, such as in Theorem \ref{thm:weak_weight_bounds} or section \ref{subsection:weight_bounds_fixed_slope}, it does not make any difference whether we are working with newforms or eigenforms.
\end{rem}

\subsection{The Galois representation.}\label{subsection:galois_representation} Let $O_{f,p}$ be the valuation ring of $K_{f,p}$, let $\p_{f,p}$ be the maximal ideal of $O_{f,p}$, denote by $e_{f,p}$ the ramification index of $K_{f,p}/\Q_p$, and by $\textrm{res}_{f,p}$ the residue degree.

It follows from the construction of the Galois representation $\rho_{f,p}$ on \'etale cohomology and the Eichler-Shimura theorem that $\rho_{f,p}$ can be defined to take its image in $\GL_2(O_{f,p})$; this involves the choice of a Galois-stable lattice. If the residual representation $\rhobar_{f,p}$ is absolutely irreducible, then by a theorem of Carayol~\cite[Th\'eor\`eme~3]{carayol}, the image can even be taken in $\Z_p[a_\ell(f) \mid ~ \mbox{$\ell$ prime with $\ell\nmid Np$}]$. In that case, the representation is also independent (up to isomorphism) of the chosen lattice; the character is independent in all cases.

Reducing $\rho_{f,p}$ ``mod $p^m$'', as defined in the introduction, means to compose this representation with the reduction of elements in $O_{f,p}$ modulo $\p_{f,p}^{(m-1)e_{f,p}+1}$. The mod~$p^m$ Galois representation $\rho_{f,p,m}$ attached to $f$ has the usual properties such as being unramified outside $Np$ and with $(a_{\ell}(f) \mod{p^m})$ equal to the trace of $\rho_{f,p,m}(\Frob_\ell)$ for any prime $\ell\nmid Np$.

It is important to notice that we have $\mathbf{Rep}_{m+1} \Rightarrow \mathbf{Rep}_m$: any mod $p^m$ representation of the type that we are considering is in fact the reduction mod $p^m$ of some mod $p^{m+1}$ representation $\rho_{f,p,m+1}$.

\subsection{The finiteness statement \texorpdfstring{$\mathbf{I}_m$}{(Im)}.}\label{subsection:I_m} Let again $f\in \fS(N)$. The ring $O_{f,p}$ has the subring $\Z_p[ a_{\ell}(f) \mid ~ \mbox{$\ell$ prime with $\ell\nmid Np$}]$.
In general, the inclusion
$$
\Z_p[a_\ell(f) \mid ~ \mbox{$\ell$ prime with $\ell\nmid Np$}] \subseteq O_{f,p}
$$
is proper, but of finite index. The statement we need is the following ``version modulo~$p^m$'' of it.
Write $\Z/(p^m)[a_\ell(f) \pmod{p^m} \mid \ell\nmid Np\textnormal{ prime}]$ for the subring of $\Zmod{p^m}$ generated by the images of $a_\ell(f)$ for all primes $\ell \nmid Np$; it is naturally a subring of $O_{f,p}/\p^{e_{f,p}(m-1)+1}$.

Now consider for fixed $N$ with $p \nmid N$ the ``index finiteness'' statement:
\smallskip

\noindent $\mathbf{I}_m$: There is a constant $\iota(N,p,m)$ depending only on $N$, $p$, $m$, such that
$$
[ (O_{f,p} / \p_{f,p}^{e_{f,p}(m-1)+1}): ((\Z/(p^m))[a_\ell(f) \pmod{p^m}) \mid \ell\nmid Np \textnormal{ prime}] \le \iota(N,p,m)
$$
for all $f\in\fS(N)$.
\smallskip

It is obvious that $\mathbf{I}_{m+1}$ implies $\mathbf{I}_m$.

In connection with the condition $\mathbf{I}_m$, the reader should be reminded of the following. Let $O_f$ denote the ring of integers of the field $K_f = \Q(a_n(f) \mid ~ n\in\N )$ of coefficients of~$f$. It has been known for a long time that
$$
\sup_{f\in \fS(N)} [ O_f : \Z[a_\ell(f) \mid ~ \mbox{$\ell$ prime with $\ell\nmid Np$}] ] = \infty,
$$
cf.\ \cite[Theorem~1.2]{jochnowitz_index} (that states that this index actually converges to $\infty$ with growing weight). See also \cite[Theorem~2.1]{calegari_emerton_large_index}. Thus, there is certainly no ``global'' reason for why statement $\mathbf{I}_m$ should be true.
\smallskip

We note that since statement $\mathbf{Strong}_1$ holds due to the work of Jochnowitz \cite{jochnowitz_finiteness}, statement $\mathbf{I}_1$ is seen to be equivalent to the residue degrees $\textrm{res}_{f,p}$ being bounded when $f$ runs through $\fS(N)$. It seems to be an open question whether this actually holds. Of course, it is implied by statement $\mathbf{(B)}$.

\subsection{Proof of Theorem \ref{thm:finiteness_conditions}.} We now establish relations between the various finiteness statements and in particular prove Theorem~\ref{thm:finiteness_conditions}.

Let us consider the following inclusions for $f \in \fS(N)$:
\begin{equation}\label{eq:rings}\begin{split}
          & \Z_p/(p^m)[a_{\ell}(f) \pmod{p^m}\;|\; \ell \nmid Np \textnormal{ prime }] =: A_m(f)\\
\subseteq\;\;\; & \Z_p/(p^m)[a_n(f) \pmod{p^m} \;|\; n \in \N] =: B_m(f)\\
\subseteq\;\;\; & O_{f,p}/\p_{f,p}^{e_{f,p}(m-1)+1} =: C_m(f).
\end{split}
\end{equation}

\begin{prop}\label{prop:conj1+2}
Fix $N \in \N$ and let $m \in \N$. We use the notation from equation~\eqref{eq:rings}.
\begin{enumerate}
\item\label{item:1} Statement $\mathbf{Rep}_m$ implies that $\# A_m(f)$ is bounded for $f \in \fS(N)$.
\item\label{item:2} Statement $\mathbf{Strong}_m$ implies that $\# B_m(f)$ is bounded for $f \in \fS(N)$.
\item\label{item:3} Statement $\mathbf{(B)}$ implies statement $\mathbf{I}_m$ and that $\# C_m(f)$ is bounded for $f \in \fS(N)$.
\item\label{item:4} Statement $\mathbf{(B)}$ is equivalent to $\# C_2(f)$ being bounded for $f \in \fS(N)$.
\item\label{item:5} Statement $\mathbf{(B)}$ implies statement $\mathbf{Rep}_m$.
\item\label{item:6} The conjunction of statement $\mathbf{Rep}_m$ and the boundedness for $f \in \fS(N)$ of the index $A_m(f) \subseteq B_m(f)$ is equivalent to statement $\mathbf{Strong}_m$.
\item\label{item:7} The conjunction of statements $\mathbf{Rep}_m$ and $\mathbf{I}_m$ implies the boundedness of $\#C_m(f)$ for $f \in \fS(N)$.
\item\label{item:8} Statement $\mathbf{(B)}$ implies statement $\mathbf{Strong}_m$.
\end{enumerate}
\end{prop}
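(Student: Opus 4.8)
The plan is to derive statement $\mathbf{Strong}_m$ from $\mathbf{(B)}$ purely by combining the earlier items of the proposition, so that (8) is essentially a formal consequence once (1)--(7) are in place. First I would invoke item (3): $\mathbf{(B)}$ implies both $\mathbf{I}_m$ and the boundedness of $\# C_m(f)$ for $f \in \fS(N)$. Next I would use item (5): $\mathbf{(B)}$ implies $\mathbf{Rep}_m$. So under the hypothesis $\mathbf{(B)}$ we have at our disposal $\mathbf{Rep}_m$, $\mathbf{I}_m$, and the uniform bound on $\# C_m(f)$.

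The key remaining point is to pass from a bound on $\# C_m(f)$ to a bound on the index $[B_m(f) : A_m(f)]$, and then to apply item (6). Since $A_m(f) \subseteq B_m(f) \subseteq C_m(f)$ by \eqref{eq:rings}, a uniform bound $\# C_m(f) \le c$ (for all $f \in \fS(N)$) immediately gives $[B_m(f) : A_m(f)] \le \# C_m(f) / \# A_m(f) \le c$ as well; in fact even the cruder estimate $[B_m(f):A_m(f)] \le \# C_m(f)$ suffices, since these are finite rings. (If one prefers to work additively: $A_m(f)$ and $B_m(f)$ are finite abelian groups with $A_m(f) \subseteq B_m(f)$, so the index divides $\# B_m(f) \le \# C_m(f)$.) Thus $\mathbf{(B)}$ yields the boundedness of the index $A_m(f) \subseteq B_m(f)$. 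Combining this with $\mathbf{Rep}_m$ (which we already have from item (5)) and applying item (6) — the conjunction of $\mathbf{Rep}_m$ and that index-boundedness is equivalent to $\mathbf{Strong}_m$ — we conclude that $\mathbf{Strong}_m$ holds.

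There is no real obstacle here: the entire content of the implication has been loaded into items (3), (5), and (6), and (8) is the bookkeeping step that assembles them. The one thing to double-check is that the bound on $\# C_m(f)$ from item (3) is genuinely uniform in $f$ (it is, since $\mathbf{(B)}$ bounds $[K_{f,p}:\Q_p]$ and hence the size of the finite quotient ring $O_{f,p}/\p_{f,p}^{e_{f,p}(m-1)+1}$, whose cardinality is $p^{\mathrm{res}_{f,p}\cdot(e_{f,p}(m-1)+1)}$ and is therefore controlled by $[K_{f,p}:\Q_p] = e_{f,p}\cdot\mathrm{res}_{f,p}$ together with $m$). With that in hand the proof is immediate. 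One could alternatively phrase (8) as the composite $\mathbf{(B)} \Rightarrow (\mathbf{Rep}_m \text{ and } \mathbf{I}_m) \Rightarrow \#C_m(f) \text{ bounded} \Rightarrow [B_m(f):A_m(f)] \text{ bounded} \Rightarrow \mathbf{Strong}_m$, using items (5), (7), the trivial index estimate, and (6) respectively; I would pick whichever chain reads most cleanly given the order in which (1)--(7) are proved.
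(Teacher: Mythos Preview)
Your argument is correct and matches the paper's proof of item~(8) essentially verbatim: the paper likewise derives $\mathbf{Rep}_m$ from~(5), obtains the boundedness of the index $[B_m(f):A_m(f)]$ from the bound on $\#C_m(f)$ (via~(3) and the inclusions~\eqref{eq:rings}), and then invokes~(6). Your write-up is simply a more explicit version of the same chain of implications.
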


\begin{proof}
\eqref{item:1} This is clear because there are only finitely many different characters modulo~$p^m$.

\eqref{item:2} This is equally clear because there are only finitely many different strong eigenforms modulo~$p^m$.

\eqref{item:3} $\mathbf{(B)}$ implies that $e_{f,p}$ and $\textrm{res}_{f,p}$ are bounded for $f \in \fS(N)$, hence so is $\#C_m(f) = \#O_{f,p}/\p_{f,p}^{e_{f,p}(m-1)+1}=p^{\textrm{res}_{f,p}(e_{f,p}(m-1)+1)}$. This implies that the fraction $\frac{\#C_m(f)}{\#A_m(f)}$, which equals the index in question, is also bounded, and we thus get statement $\mathbf{I}_m$.

\eqref{item:4} If $\#C_2(f) = p^{\textrm{res}_{f,p}(e_{f,p}+1)}$ is bounded for $f \in \fS(N)$, then
so are $\textrm{res}_{f,p}$ and $e_{f,p}$, implying $\mathbf{(B)}$.

\eqref{item:5} As we are assuming $\mathbf{(B)}$ there is a finite extension $K$ of $\Q_p$ of bounded degree such that any $f\in \fS(N)$ has coefficients in~$O$, the valuation ring of~$K$. If $\p$ is the prime of $O$ above $p$ then this means that any representation $\rho_{f,p,m}$ attached to an $f\in \fS(N)$ as above has image in the finite group $G := \GL_2(O/\p^{\gamma})$, where $\gamma := e(K/\Q_p)(m-1) + 1$. Hence, the degree $[L:\Q]$ is bounded, where $L/\Q$ is the extension cut out by a $\rho_{f,p,m}$. As $L$ ramifies at most at $Np$, by Hermite-Minkowski (and the fact that bounded degree and bounded ramification set imply bounded discriminant) there are only finitely many possibilities for $L/\Q$. For each $L/\Q$ there are only finitely many equivalence classes of representations $\Gal(L/\Q) \hookrightarrow \GL_2(O/\p^{\gamma})$, and thus $\fR_m(N)$ is finite, i.e., we have statement $\mathbf{Rep}_m$.

\eqref{item:6} Statement $\mathbf{Strong}_m$ implies $\mathbf{Rep}_m$ and the boundedness of $\#B_m(f)$ and so in particular the boundedness of the index $A_m(f) \subseteq B_m(f)$. Conversely, by $\mathbf{Rep}_m$ there are only finitely many collections of numbers $(a_\ell(f) \pmod{p^m})$ for $\ell \nmid Np$. Moreover, $\mathbf{Rep}_m$ implies the boundedness of $\#A_m(f)$, and hence together with the boundedness of the index $A_m(f) \subseteq B_m(f)$ implies the boundedness of $\#B_m(f)$. So, for each prime $\ell$ with $\ell \mid Np$ there is then only a finite number of possibilities for $(a_\ell(f) \pmod{p^m})$. We deduce that $\mathbf{Strong}_m$ holds.

\eqref{item:7} This is clear from the inclusions \eqref{item:1}.

\eqref{item:8} As statement $\mathbf{(B)}$ implies both $\mathbf{Rep}_m$ by~\eqref{item:5} and the finiteness
of the index $A_m(f) \subseteq B_m(f)$, the result follows by~\eqref{item:6}.
\end{proof}

\begin{proof}[Proof of Theorem~\ref{thm:finiteness_conditions}.] Statement $\mathbf{(B)}$ implies $\mathbf{Strong}_m$, $\mathbf{Rep}_m$, and $\mathbf{I}_m$ by Proposition~\ref{prop:conj1+2}\eqref{item:8}, \eqref{item:5}, and \eqref{item:3}. Thus $(1) \Rightarrow (2)$ in the Theorem is clear.

We also recall the trivial implications $\mathbf{Strong}_m \Rightarrow \mathbf{Rep}_m$, $\mathbf{Rep}_{m+1} \Rightarrow \mathbf{Rep}_m$, and $\mathbf{Strong}_{m+1} \Rightarrow \mathbf{Strong}_m$.

Thus, if we assume $\mathbf{Rep}_m$ and $\mathbf{I}_m$ for some $m\ge 2$, we have $\mathbf{Rep}_2$ and $\mathbf{I}_2$ and hence the boundedness of $\#C_2(f)$ for $f \in \fS(N)$ by Proposition~\ref{prop:conj1+2}\eqref{item:7}; we then obtain $\mathbf{(B)}$ by Proposition~\ref{prop:conj1+2}\eqref{item:4}. Statement $\mathbf{Strong}_m$ for all $m$ follows.

This shows that we have $(2) \Rightarrow (3) \Rightarrow (1)$ in the Theorem.
\end{proof}

\subsection{Weak eigenforms modulo \texorpdfstring{$p^m$}{pm} are not necessarily strong in any weight if \texorpdfstring{$m>1$}{m>1}.}\label{section:weak_not_strong}
This was raised as a question in~\cite{ckw}. For an explicit example, let $f = E_4^6 \Delta + 2\Delta^3$ in weight~$36$, level~$1$ modulo~$4$. One can check that $f$ is an eigenform by computing the first couple of Hecke operators. Note that $f$ is not congruent to $\Delta$ modulo~$4$, so by the result of Hatada that every strong level~$1$ form modulo~$4$ has to be~$\Delta$ (Theorems 3 and~4 of~\cite{hatada1977b}), we find that $f$ is weak but not strong.

\subsection{Abundance of weak eigenforms and of Galois representations mod \texorpdfstring{$p^m$}{pm}.}\label{section:abundance_weak}
There exist infinitely many weak eigenforms modulo $p^m$ in some fixed weights. This was pointed out by Calegari and Emerton in \cite{calegari_emerton_large_index}, and the reasoning is as follows.
Suppose that we have eigenforms $f$ and $g$ in some $S_k(\Gamma_1(N),\Zbar_p )$ such that
$$
f\equiv g \pmod{p}, \quad \mbox{but} \quad f \not\equiv g \pmod{p^2} .
$$
Suppose for simplicity that their coefficient fields are unramified over $\Q_p$ so that $p$ generates the maximal ideal of their valuation rings. Let us write the forms like this:
$$
f = \varphi + pf_1 \textnormal{ and } g = \varphi + pg_1
$$
with modular forms $\varphi,f_1,g_1$. The eigenvalues of some Hecke operator $T$ on $f$ and $g$ are
$$
\lambda = \alpha + p\lambda_1 \textnormal{ and } \mu = \alpha + p\mu_1.
$$
Let $a,b$ be in the maximal unramified extension of $\Z_p$ such that $a+b$ is invertible. Put
$$
h_{a,b} = \frac{af+bg}{a+b}.
$$
Note $ph_{a,b} \equiv p\varphi \pmod{p^2}$, independent of $a$ and $b$. This yields
$$
Th_{a,b} \equiv (\alpha + p \frac{a\lambda_1 + b\mu_1}{a+b}) h_{a,b} \pmod{p^2}.
$$
Since $f \not\equiv g \pmod{p^2}$ there exists some Hecke operator $T$ for which the $\lambda_1$ and $\mu_1$ are not both $0$. It is then obvious that the eigenvalue of that $T$ on $h_{a,b}$ can take infinitely many different values by varying $a$ and $b$.

We point out that this implies also that there can be infinitely many non-isomorphic modular Galois representations modulo~$p^2$ in the same level (under the assumption that the mod~$p$ Galois representation attached to~$f$ is absolutely irreducible). Hence, the analogue of Conjecture \ref{conj:strong_weight_bounds} with weak eigenforms mod~$p^m$ instead of strong ones is false. Moreover, by choosing $a,b$ appropriately, the ring of traces can be of arbitrarily large degree.

Of course, the same argument works mutatis mutandis with more general coefficients and also generally in a situation where $f\equiv g \pmod{p^m}$, but $f \not\equiv g \pmod{p^{m+1}}$.

\subsection{Proof of Theorem \ref{thm:weak_weight_bounds}.}\label{subsection:definition_of_weight_bounds}
We now begin to prepare for the proof of Theorem \ref{thm:weak_weight_bounds}. As already mentioned above in \ref{subsection:summary,N=1,p=2}, the idea of the proof was explained to us by Frank Calegari. We shall give a slight variant of his sketch, working in particular with group cohomology rather than cohomology of modular curves.
We allow $p$ to be any prime; the specialisation $p \ge 5$ will only appear in the end, so that we will prove Remark~\ref{rem:weak_weight_bounds} along the way.

Let us fix $k_0 \in \N$ and $m\in\N$. Since the results are known for $m=1$, we henceforth assume $m \ge 2$.

We modify Euler's $\varphi$-function in the following way, motivated by the
weights of the Eisenstein series to be used at the end of the proof. We let $\tilde\varphi(p^m) = \varphi(p^m) = (p-1)p^{m-1}$
(i.e., the usual Euler totient function) if $p > 2$ and $\tilde\varphi(2^m) = 2^{m-2}$.

Define
$$
M_{\le n} := \bigoplus_{2 \le k \le n, k \equiv k_0 \mod \tilde\varphi(p^m)} M_k(\Gamma_1(N), \Qbar_p),
$$
and define $\TT_{\le n}$ as the $\Z_p$-algebra of all Hecke operators, acting diagonally on $M_{\le n}$.
We then define
$$
M := \bigoplus_{k \ge 2, k \equiv k_0 \mod \tilde\varphi(p^m)} M_k(\Gamma_1(N), \Qbar_p),
$$
i.e., $M$ is the direct limit of the $M_{\le n}$. We define $\TT$ as the projective limit of the $\Z_p$-algebras $\TT_{\le n}$. Then $\TT$ acts in a natural way on $M$.

We consider the natural map
$$
M \longrightarrow M_1 := \bigoplus_{k \ge 2, k \equiv k_0 \mod \tilde\varphi(p^m)} M_k(\Gamma_1(N) \cap \Gamma(p^m), \Qbar_p) ,
$$
which is of course an injection. This injection is not equivariant with respect to the Hecke operator~$T_p$ and this is the main reason why we are working with the $p$-deprived Hecke algebras below.

For the sake of completeness, we include a formal argument for the equivariance with respect to the action of the Hecke operators $T_n$ with $n$ prime to~$p$.
It suffices to consider the action of $T_{\ell}$ where $\ell$ is a prime, $\ell\neq p$. Put:
$$
\delta := \abcd{1}{0}{0}{\ell} .
$$
Then, regardless of whether $\Gamma$ is $\Gamma_1(N)$ or $\Gamma_1(N) \cap \Gamma(p^m)$, we find the following representatives $\{ \gamma_i\}$ of left cosets of $\delta^{-1} \Gamma \delta \cap \Gamma$ in $\Gamma$ (cf.\ for instance the reasoning in \cite[5.2]{diamond-shurman}). Put:
$$
\gamma_i := \abcd{1}{p^m i}{0}{1}
$$
for $i=0,\ldots, \ell-1$, and, in case $\ell\nmid N$,
$$
\gamma_{\infty} := \abcd{a\ell}{p^m b}{Np^m}{1}
$$
where $a,b\in\Z$ are chosen so that $a\ell - p^{2m}Nb = 1$. Then with $\delta_i := \delta \gamma_i$ so that
$$
\delta_i = \abcd{1}{p^m i}{0}{\ell}
$$
for $j=0,\ldots, \ell-1$, and
$$
\delta_{\infty} = \abcd{a\ell}{p^m b}{Np^m\ell}{\ell}
$$
the action of $T_{\ell}$ on the weight $k$ component both of $M$ and $M_1$ is given by
$$
f \mapsto \sum_i f\mid_k \delta_i
$$
where the summation is over $i\in \{0,\ldots, \ell-1\}$ if $\ell\mid N$, and over $i\in \{0,\ldots, \ell-1\} \cup \{ \infty\}$ if $\ell\nmid N$.

Next we utilize the Eichler-Shimura embedding of $M_1$ into $H \otimes \Qbar_p$ where
$$
H := \bigoplus_{k \ge 2, k \equiv k_0 \mod \tilde\varphi(p^m)} H^1(\Gamma_1(N) \cap \Gamma(p^m),V_{k-2})
$$
with
$$
V_w := \Sym^w (\Z_p^2)
$$
for $w\in\Z_{\ge 0}$. This embedding is equivariant with respect to the action of the Hecke operators $T_n$ with $n$ prime to $p$. We recall, cf.\ \cite[8.3]{shimura} or \cite{diamond-im}, p.\ 116, the action of a Hecke operator $T_{\ell}$ for $\ell$ prime, $\ell\neq p$, on $H^1(\Gamma_1(N) \cap \Gamma(p^m),V_w)$: if $c\in C^1(\Gamma_1(N) \cap \Gamma(p^m), V_w)$ then
$$
(T_{\ell} c)(\gamma) = \sum_i \delta_i^{\iota} . c(\delta_i \gamma \delta_{j(i)}^{-1})
$$
where $j(i)$ is such that $\delta_i \gamma \delta_{j(i)}^{-1} \in \Gamma$, and $\iota$ is the involution
$$
\abcd{a}{b}{c}{d} ^{\iota} := \abcd{d}{-b}{-c}{a},
$$
and where, as above, the summation is over $i\in \{0,\ldots, \ell-1\}$ if $\ell\mid N$, and over $i\in \{0,\ldots, \ell-1\} \cup \{ \infty\}$ if $\ell\nmid N$.

Let us now write $\TT'$ and $\TT'_{coh}$ for the `$p$-deprived' Hecke algebras over $\Z_p$, i.e., those generated by the operators whose index is prime to~$p$, acting on $M$ and $H$, respectively. As summary of the above discussion we now have the following.

\begin{lem}\label{lem:ce-disc-1} There is a Hecke equivariant injection $M \hookrightarrow H \otimes \Qbar_p$ giving rise to a surjection $\TT'_{coh} \twoheadrightarrow \TT'$ of $\Z_p$-algebras.
\end{lem}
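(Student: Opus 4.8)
The plan is to obtain the injection $M \hookrightarrow H \otimes \Qbar_p$ simply as the composite of the two maps already assembled above, and then to extract the surjection of Hecke algebras by restricting the cohomological prime-to-$p$ Hecke action to the image. First I would record that the inclusion of groups $\Gamma_1(N) \cap \Gamma(p^m) \subseteq \Gamma_1(N)$ gives the injection $M \hookrightarrow M_1$ (a modular form on $\Gamma_1(N)$ is a fortiori one on the smaller group), and that postcomposing with the Eichler--Shimura embedding $M_1 \hookrightarrow H \otimes \Qbar_p$ yields an injection $\iota\colon M \hookrightarrow H \otimes \Qbar_p$. Both constituent maps commute with every $T_\ell$, $\ell \neq p$ prime: for the first this is the coset computation with the $\delta_i$ recorded above, for the second it is the recalled formula for the action of $T_\ell$ on $H^1(\Gamma_1(N)\cap\Gamma(p^m), V_w)$ via the $\delta_i$ and the involution $\iota$. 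Hence $\iota$ intertwines the action of each such $T_\ell$ on $M$ with its action on $H \otimes \Qbar_p$. (This is exactly the point at which $T_p$ must be excluded, since $M \hookrightarrow M_1$ fails to be $T_p$-equivariant; this is why we pass to the $p$-deprived algebras $\TT'$ and $\TT'_{coh}$.)

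Next I would produce the surjection. Equivariance of $\iota$ under the $T_\ell$ shows that the $\Qbar_p$-subspace $\iota(M) \subseteq H \otimes \Qbar_p$ is stable under each $T_\ell$ with $\ell \neq p$, hence under the whole algebra $\TT'_{coh}$, since these operators generate $\TT'_{coh}$ over $\Z_p$. Restricting the $\TT'_{coh}$-action to the invariant subspace $\iota(M) \cong M$ therefore defines a homomorphism of $\Z_p$-algebras $\TT'_{coh} \to \operatorname{End}_{\Qbar_p}(M)$ sending $T_\ell$ to $T_\ell$; its image is the $\Z_p$-subalgebra of $\operatorname{End}_{\Qbar_p}(M)$ generated by the $T_\ell$ with $\ell \neq p$, which is by definition $\TT'$. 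This is the asserted surjection $\TT'_{coh} \twoheadrightarrow \TT'$. Equivalently, one can phrase the last step as: any polynomial in the $T_\ell$ annihilating $H$ also annihilates $H \otimes \Qbar_p$, hence, by equivariance and injectivity of $\iota$, annihilates $M$; so every relation holding among the prime-to-$p$ Hecke operators on $H$ holds on $M$, which is precisely the existence of the surjection.

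Since both the inclusion $M \hookrightarrow M_1$ and Eichler--Shimura are classical, the argument is essentially formal and I do not anticipate a serious obstacle. The one point deserving care is that Eichler--Shimura is being applied to the full spaces $M_k$ of modular forms, not only to the cusp forms: this is why the Eisenstein (boundary) contribution to $H^1$ is kept in the definition of $H$, and why one records the explicit coset description of $T_\ell$ on cohomology, so as to match it with the action on $q$-expansions through the fixed embedding $\Qbar \hookrightarrow \Qbar_p$. A secondary bookkeeping point is to make sure the direct sums over $k \equiv k_0 \bmod \tilde\varphi(p^m)$ are respected throughout, which they are since all the maps in sight preserve weight.
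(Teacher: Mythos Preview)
Your proposal is correct and follows essentially the same approach as the paper: the lemma is stated there as a summary of the preceding discussion, which consists precisely of composing the inclusion $M \hookrightarrow M_1$ with the Eichler--Shimura embedding $M_1 \hookrightarrow H \otimes \Qbar_p$, checking prime-to-$p$ Hecke equivariance via the explicit coset representatives $\delta_i$, and then reading off the surjection of Hecke algebras from the equivariant injection. Your added explanation of why the surjection exists (relations among the $T_\ell$ on $H$ persist on $M$) makes explicit a point the paper leaves implicit.
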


The decisive observation is now the following proposition.

\begin{prop}\label{prop:ce-disc-2} Let $I_{coh}$ be the annihilator of $\TT_{coh}'$ acting on $H \otimes \Z/p^m\Z$.

Then $I_{coh}$ is an open ideal and the quotient $\TT_{coh}'/I_{coh}$ is finite. Hence, if we denote by $I$ the image of the ideal $I_{coh}$ under the surjection $\TT_{coh}' \twoheadrightarrow \TT'$ from Lemma~\ref{lem:ce-disc-1}, then $I$ is an open ideal of~$\TT'$ and the quotient $\TT'/I$ is also finite.
\end{prop}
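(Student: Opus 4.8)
The plan is to prove the first assertion directly, by showing that $\TT'_{coh}$ acts on $H\otimes\Z/p^m\Z$ through a finite quotient ring; openness of $I_{coh}$ will then follow from profiniteness of $\TT'_{coh}$, and the statements about $I\subseteq\TT'$ will follow by transport along the surjection of Lemma~\ref{lem:ce-disc-1}. Put $\Gamma:=\Gamma_1(N)\cap\Gamma(p^m)$ and fix the standard basis $e_1,e_2$ of $(\Z/p^m\Z)^2$. The key elementary observation is that $\Gamma$ acts \emph{trivially} on $V_w\otimes\Z/p^m\Z=\Sym^w((\Z/p^m\Z)^2)$, because every $\gamma\in\Gamma$ is congruent to the identity modulo $p^m$. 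Since $\Gamma$ has finite index in $\SL_2(\Z)$ it is finitely generated, so $C:=H^1(\Gamma,\Z/p^m\Z)=\Hom(\Gamma^{\mathrm{ab}},\Z/p^m\Z)$ is a finite abelian $p$-group, \emph{independent of the weight}. With trivial coefficients a cocycle is a homomorphism, so one gets identifications, equivariant for the prime-to-$p$ Hecke action,
$$
H^1\bigl(\Gamma,\,V_{k-2}\otimes\Z/p^m\Z\bigr)\;\cong\;C\otimes_{\Z/p^m\Z}(V_{k-2}\otimes\Z/p^m\Z)\;=\;\bigoplus_{a=0}^{k-2}C\cdot e_1^{a}e_2^{k-2-a},
$$
and the universal-coefficient map $H^1(\Gamma,V_{k-2})\otimes\Z/p^m\Z\hookrightarrow H^1(\Gamma,V_{k-2}\otimes\Z/p^m\Z)$ is injective; hence $I_{coh}$ contains the annihilator of $\bigoplus_{k}H^1(\Gamma,V_{k-2}\otimes\Z/p^m\Z)$, and it suffices to prove that this annihilator has finite index and is open.

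Next I would compute the operator $T_\ell$, for a prime $\ell\neq p$, on $H^1(\Gamma,V_{k-2}\otimes\Z/p^m\Z)$ from the explicit cocycle formula and the coset representatives $\delta_0,\dots,\delta_{\ell-1}$ (and $\delta_\infty$ when $\ell\nmid N$) recalled just before Lemma~\ref{lem:ce-disc-1}. Reducing these matrices modulo $p^m$ — using $a\ell\equiv1\pmod{p^{2m}}$ in the case of $\delta_\infty$ — all of them become $\abcd{1}{0}{0}{\ell}$, hence $\delta_i^{\iota}\equiv\abcd{\ell}{0}{0}{1}\pmod{p^m}$, which scales the monomial $e_1^{a}e_2^{k-2-a}$ by $\ell^{a}$ (up to one weight-independent determinant twist that is harmless below). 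Feeding this into the formula, $T_\ell$ preserves the direct-sum decomposition above and acts on the summand $C\cdot e_1^{a}e_2^{k-2-a}$ as $\ell^{a}T_\ell^{(0)}$, where $T_\ell^{(0)}\in\mathrm{End}_{\Z/p^m\Z}(C)$ is the weight-two Hecke operator on $C=H^1(\Gamma,\Z/p^m\Z)$ attached to the same double coset. Thus for every integer $a\ge0$ the rule $T_\ell\mapsto\ell^{a}T_\ell^{(0)}$ extends to a ring homomorphism $\sigma_a\colon\TT'_{coh}\to\mathrm{End}_{\Z/p^m\Z}(C)$, and $\sigma_a$ depends only on $a$ modulo the exponent $\lambda$ of the finite group $(\Z/p^m\Z)^\times$, because $\ell^{a}\bmod p^m$ does.

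The finiteness is then immediate. The action of $T\in\TT'_{coh}$ on $H^1(\Gamma,V_{k-2}\otimes\Z/p^m\Z)$ is $\bigoplus_{a=0}^{k-2}\sigma_{a}(T)$, and since $\sigma_a=\sigma_{a\bmod\lambda}$ the whole action of $\TT'_{coh}$ on $\bigoplus_k H^1(\Gamma,V_{k-2}\otimes\Z/p^m\Z)$ factors through $\Phi:=(\sigma_0,\dots,\sigma_{\lambda-1})\colon\TT'_{coh}\to\mathrm{End}_{\Z/p^m\Z}(C)^{\lambda}$, a homomorphism into a \emph{finite} ring. Hence $I_{coh}\supseteq\ker\Phi$ has finite index, so $\TT'_{coh}/I_{coh}$ is finite. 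As $\TT'_{coh}$ is profinite — the projective limit of its finite-level quotients, each a finitely generated $\Z_p$-module — and $\ker\Phi$ contains the kernel of the projection onto one sufficiently large finite-level quotient (large enough to exhibit all the summands with $a\le\lambda-1$), which is an open ideal, we conclude that $\ker\Phi$, and hence $I_{coh}$, is open. Finally, letting $I$ be the image of $I_{coh}$ under $\TT'_{coh}\twoheadrightarrow\TT'$ from Lemma~\ref{lem:ce-disc-1}, the quotient $\TT'/I$ is a quotient of the finite ring $\TT'_{coh}/I_{coh}$ and so is finite, while $I$, being the image of an open (hence compact) ideal under a continuous surjection of profinite rings, is closed of finite index, hence open.

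The one genuinely delicate point, I expect, is the Hecke computation in the second paragraph: one must check carefully that the double-coset operator on $H^1$, with these specific representatives, really does reduce modulo $p^m$ — summand by summand in the monomial basis — to the scalar operators $\ell^{a}T_\ell^{(0)}$; in particular that the index $j(i)$ in the cocycle formula enters only as bookkeeping modulo $p^m$, and that the chosen normalization of the $\Gamma$-action on $\Sym^{k-2}$ contributes only a weight-independent power of $\ell$. Once this is pinned down, the rest is formal.
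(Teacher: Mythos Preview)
Your proof is correct and follows essentially the same approach as the paper: embed $H\otimes\Z/p^m\Z$ into $\widetilde H=\bigoplus_k H^1(\Gamma,V_{k-2}\otimes\Z/p^m\Z)$, use that the $\delta_j^\iota$ are diagonal mod~$p^m$ to decompose each summand Hecke-equivariantly as $\bigoplus_i H^1(\Gamma,\Z/p^m\Z)(i)$, and conclude finiteness from the fact that only finitely many distinct twists occur since $(\Z/p^m\Z)^\times$ has finite exponent. The paper packages this slightly differently---it introduces the Hecke algebra $\widetilde{\TT}'_{coh}$ acting on $\widetilde H$ and the surjection $\widetilde{\TT}'_{coh}/\widetilde I_{coh}\twoheadrightarrow\TT'_{coh}/I_{coh}$ rather than your maps $\sigma_a$ (which, strictly speaking, are well-defined on the abstract Hecke algebra rather than on $\TT'_{coh}$ itself), and it obtains openness of $I_{coh}$ simply from the general remark that closed ideals of finite index in profinite rings are open---but the substance is identical.
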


\begin{proof} Let us start with the remark that annihilators of continuous actions on Hausdorff spaces are always closed ideals. Moreover, in profinite rings, closed ideals of finite index are open.

Retain notation as above, before Lemma \ref{lem:ce-disc-1}, and put $\Gamma := \Gamma_1(N) \cap \Gamma(p^m)$. Notice first that the short exact sequence
$$
0 \longrightarrow \Sym^w (\Z_p^2) =: V_w \stackrel{\cdot p^m}{\longrightarrow} V_w \longrightarrow V_w \otimes \Z/p^m \Z \longrightarrow 0
$$
gives rise to the injection
$$
0 \longrightarrow H^1(\Gamma,V_w) \otimes \Z/p^m \Z \longrightarrow H^1(\Gamma, V_w \otimes \Z/p^m \Z) .
$$
(In fact, this is most often an isomorphism, notably when $N$ is sufficiently large so that $\Gamma$ is torsion free, or when $p\ge 5$; this follows from the proof of Proposition 2.6(a) of \cite{wiese_parabolic}; however, we do not need to know this for our argument.)

We have a natural definition of Hecke operators acting on $H^1(\Gamma, V_w \otimes \Z/p^m \Z)$, namely by the ``same'' formulas that define the action on $H^1(\Gamma,V_w)$. Thus we have a $p$-deprived Hecke algebra $\widetilde{\TT}_{coh}'$ acting on
$$
\widetilde{H} := \bigoplus_{k \ge 2, k \equiv k_0 \mod \tilde\varphi(p^m)} H^1(\Gamma,V_{k-2} \otimes \Z/p^m \Z) ,
$$
and an injection $H \otimes \Z/p^m \Z \hookrightarrow \widetilde{H}$ that is $p$-deprived Hecke equivariant. Thus, we have a surjection $\widetilde{\TT}_{coh}'/\widetilde{I}_{coh} \twoheadrightarrow \TT_{coh}'/I_{coh}$, where $\widetilde{I}_{coh}$ denotes the annihilator of $\widetilde{\TT}_{coh}'$ acting on $\widetilde{H}$. We see that it suffices to show that $\widetilde{\TT}_{coh}'/\widetilde{I}_{coh}$ is finite.

As abelian group, $V_w \otimes \Z/p^m \Z \cong (\Z/p^m \Z)^{w+1}$ is isomorphic to the abelian group of homogeneous polynomials, say in $x$ and $y$, of total degree $w$ and coefficients in $\Z/p^m \Z$. Consider a prime $\ell$ different from $p$. Now the main point is that each of the matrices $\delta_j^{\iota}$ above is congruent modulo $p^m$ to a diagonal matrix with entries $\ell$ and $1$. Hence the action of $\delta_j^{\iota}$ on the basis $x^u y^v$ where $u+v=w$ is diagonal:
$$
\delta_j^{\iota} . x^u y^v = \ell^u x^u y^v
$$
for every $j=0,\ldots,\ell-1, \infty$. Thus, if we view a cocycle $c\in C^1(\Gamma , V_w \otimes \Z/p^m \Z)$ as having values in $(\Z/p^m \Z)^{w+1}$ with coordinate functions $c_i$, $i=0,\ldots w$, then the action of $T_{\ell}$ on $c$ is given in concrete terms as
\begin{eqnarray*}
(T_{\ell} c)(\gamma) &=& \sum_j \delta_j^{\iota} . (c_0(\delta_j \gamma \delta_{i(j)}^{-1}),c_1(\delta_j \gamma \delta_{i(j)}^{-1}), \ldots, c_w(\delta_j \gamma \delta_{i(j)}^{-1}))\\
&=& \sum_j (\ell^w c_0(\delta_j \gamma \delta_{i(j)}^{-1}), \ell^{w-1} c_1(\delta_j \gamma \delta_{i(j)}^{-1}), \ldots , c_w(\delta_j \gamma \delta_{i(j)}^{-1}))\\
&=& (\ell^w \sum_j c_0(\delta_j \gamma \delta_{i(j)}^{-1}) , \ell^{w-1} \sum_j c_1(\delta_j \gamma \delta_{i(j)}^{-1}) , \ldots, \sum_j c_w(\delta_j \gamma \delta_{i(j)}^{-1}))
\end{eqnarray*}
where the sum is over $j=0,\ldots,\ell-1$ when $\ell\mid N$, and over $j=0,\ldots,\ell-1, \infty$ when $\ell\nmid N$.

It follows that, as a module over the $p$-deprived Hecke algebra, we have
$$
H^1 (\Gamma , V_w \otimes \Z/p^m \Z) \cong \bigoplus_{i=0}^w H^1 (\Gamma ,\Z/p^m \Z)(i)
$$
where $H^1 (\Gamma ,\Z/p^m \Z)(i)$ is $H^1 (\Gamma ,\Z/p^m \Z)$, but with twisted Hecke action so that $T_{\ell}$ acts as the old $T_{\ell}$, but then multiplied by $\ell^i$.

Since the largest order of an element in $(\Z/p^m \Z)^{\times}$ is precisely $\tilde\varphi(p^m)$, we conclude that the action of $\widetilde{\TT}_{coh}'/\widetilde{I}_{coh}$ on $\widetilde{H}$ factors through the finite Hecke module
$$
\bigoplus_{i=0}^{\tilde\varphi(p^m)-1} H^1 (\Gamma ,\Z/p^m \Z)(i) ,
$$
and we are done.
\end{proof}

\begin{lem}\label{lem:ce-disc-3} Let $f$ be a normalized eigenform on $\Gamma_1(N)$ of weight~$k_0$ with eigenvalues $a_\ell \in \OO$ for all primes $\ell \neq p$, where $\OO$ is the valuation ring of a finite extension $K$ of~$\Q_p$. Write $e$ for the ramification index of $K/\Q_p$. Then there is a ring homomorphism
$$
\psi: \TT_{coh}'/I_{coh} \to \OO/\p_K^{e(m-1)+1}
$$
such that $\psi(T_\ell) \equiv a_\ell \mod \p_K^{e(m-1)+1}$ for all primes $\ell \neq p$. Moreover, $\psi$ factors through $\TT'/I$.
\end{lem}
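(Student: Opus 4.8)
The plan is to take $\psi$ to be the reduction modulo $\p_K^{e(m-1)+1}$ of the Hecke eigensystem of $f$, so that the whole lemma comes down to checking that this reduction annihilates $I_{coh}$. We may assume $k_0 \ge 2$, the only relevant case. Since $f$ is a genuine normalized eigenform of weight $k_0$ on $\Gamma_1(N)$, it lies in $M$ and spans a line stable under $\TT'$, hence defines a $\Z_p$-algebra homomorphism $\lambda_f : \TT' \to \OO$ with $\lambda_f(T_\ell) = a_\ell$ for every prime $\ell \ne p$ (the image lies in $\OO$ because $\TT'$ is generated over $\Z_p$ by the $T_\ell$, all of whose images $a_\ell$ lie in $\OO$). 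Pulling back along the surjection $\TT'_{coh} \twoheadrightarrow \TT'$ of Lemma~\ref{lem:ce-disc-1} and reducing modulo $\p_K^{e(m-1)+1}$ yields $\bar\lambda_f : \TT'_{coh} \to \OO/\p_K^{e(m-1)+1}$. If $\bar\lambda_f(I_{coh}) = 0$, then $\bar\lambda_f$ descends to the desired $\psi$ on $\TT'_{coh}/I_{coh}$, with $\psi(T_\ell) \equiv a_\ell \bmod \p_K^{e(m-1)+1}$; and since $\bar\lambda_f$ already factors through $\TT'_{coh} \twoheadrightarrow \TT'$, while $I$ is by construction the image of $I_{coh}$ in $\TT'$, the map $\psi$ then automatically factors through $\TT'/I$ as well.

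To prove $\bar\lambda_f(I_{coh}) = 0$, I would locate $f$ inside the cohomology. By the Eichler--Shimura embedding $M \hookrightarrow M_1 \hookrightarrow H \otimes \Qbar_p$, which is injective, respects the weight grading, and is equivariant for the prime-to-$p$ Hecke operators (the discussion preceding Lemma~\ref{lem:ce-disc-1}), the form $f$ maps to a nonzero vector $\omega_f$ in the single summand $H^1(\Gamma, V_{k_0-2}) \otimes \Qbar_p$, where $\Gamma = \Gamma_1(N) \cap \Gamma(p^m)$, and $T_\ell \omega_f = a_\ell \omega_f$ for all $\ell \ne p$. Set $N_0 := H^1(\Gamma, V_{k_0-2})$, a finitely generated $\Z_p$-module on which $\TT'_{coh}$ acts, since it preserves the weight grading of $H$. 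Let $\bar N_0$ denote the torsion-free quotient of $N_0 \otimes_{\Z_p} \OO$: it is a free $\OO$-module of some rank $r \ge 1$, carries an induced $\TT'_{coh}$-action, and satisfies $\bar N_0 \otimes_\OO K = N_0 \otimes_{\Z_p} K$. The simultaneous $a_\ell$-eigenspace ($\ell \ne p$) in $\bar N_0 \otimes_\OO K$ is cut out by $K$-linear equations and is nonzero (after $\otimes_K \Qbar_p$ it contains $\omega_f$), so I may pick a vector $\omega$ in it, scaled so that $\omega \in \bar N_0$ but $\omega \notin \p_K \bar N_0$ (possible since $\bar N_0$ is free).

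Finally, fix $t \in I_{coh}$. Then $t$ kills $H \otimes \Z/p^m\Z$, i.e.\ $tH \subseteq p^m H$; restricting to the weight-$k_0$ summand gives $t N_0 \subseteq p^m N_0$, hence $t \bar N_0 \subseteq p^m \bar N_0$ after tensoring with $\OO$ and passing to the torsion-free quotient. Applying $t$ to the eigenvector $\omega$ gives $\lambda_f(t)\,\omega = t\omega \in p^m \bar N_0$; since $\bar N_0$ is free over $\OO$ and $\omega$ is primitive, this forces $\lambda_f(t) \in p^m\OO \subseteq \p_K^{e(m-1)+1}$ (note $p^m\OO = \p_K^{em}$ and $em \ge e(m-1)+1$), i.e.\ $\bar\lambda_f(t) = 0$. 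This proves $\bar\lambda_f(I_{coh}) = 0$ and finishes the proof. The only point requiring care is the $\Z_p$-torsion in $H^1(\Gamma, V_{k_0-2})$, which is exactly why the divisibility is extracted from the free module $\bar N_0$ rather than from $N_0$ directly; everything else is formal given Lemma~\ref{lem:ce-disc-1}.
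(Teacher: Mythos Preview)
Your proof is correct and follows essentially the same approach as the paper: locate the eigenform as a primitive eigenvector in the integral cohomology $H \otimes \OO$, use that any $t \in I_{coh}$ sends $H$ into $p^m H$, and conclude from primitivity that the eigenvalue $\lambda_f(t)$ lies in $p^m\OO \subseteq \p_K^{e(m-1)+1}$. The only difference is that you are explicit about passing to the torsion-free quotient $\bar N_0$ of $N_0 \otimes_{\Z_p} \OO$ in order to make the divisibility argument clean, whereas the paper leaves this point implicit when it scales $c$ to satisfy $\pi \nmid c$ in $H \otimes \OO$.
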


\begin{proof} The eigenform~$f$ lives in $M$ and under the injection from Lemma~\ref{lem:ce-disc-1} gives rise to an eigenclass $0 \neq c \in H \otimes K$. The eigenvalues can be expressed by the ring homomorphism
$$
\psi : ~ \TT_{coh}' \twoheadrightarrow \TT' \to \OO
$$
such that $\psi(T_\ell) = a_\ell$ and hence $T_\ell c= a_\ell c$ for all primes $\ell \neq p$. By multiplying or dividing by a power of $\pi$, a fixed uniformizer of~$K$,
we may assume that $c \in H \otimes \OO$ and that $\pi \nmid c$. Write $\overline{c}$ for the image of $c$ in
$$
H \otimes \OO/\p_K^{e(m-1)+1} \cong (H \otimes \Z/p^m\Z) \otimes \OO/\p_K^{e(m-1)+1}.
$$
Denote by $\overline{\psi}$ the composition of $\psi$ with the natural projection $\OO \twoheadrightarrow \OO/\p^{e(m-1)+1}$. The eigenvalue of the Hecke operator~$T$ on the eigenclass $\overline{c}$ is given by $\overline{\psi}(T)$. Let now $T \in I_{coh}$. Then
$$
0 = T\overline{c} = \overline{\psi}(T)\overline{c}.
$$
As $\pi \nmid c$, we conclude $\overline{\psi}(T)=0$, and hence that $\psi$ factors through $\TT_{coh}'/I_{coh}$, as claimed. As $\psi$ factors through~$\TT'$, it follows that $\psi$ factors through~$\TT'/I$.
\end{proof}

\begin{proof}[Proof of Theorem~\ref{thm:weak_weight_bounds}.] We must show the existence of a constant $k(N,p,m)$ depending only on $N$, $p$, $m$, such that any normalized eigenform $f$ on $\Gamma_1(N)$ of any weight $k$ is congruent modulo $p^m$ to a weak eigenform modulo $p^m$ on $\Gamma_1(N)$ in weight bounded by $k(N,p,m)$.

Obviously we may, and will, assume that $k\ge 2$ and only consider such $f$ of weight $k$ congruent modulo $\tilde\varphi(p^m)$ to some fixed $k_0$ (that can be taken to satisfy $2\le k_0 \le 1 + \tilde\varphi(p^m)$.)

Let $f = \sum_n a_n q^n$ be the $q$-expansion of $f$ where the coefficients are in a finite extension $K/\Q_p$.
By Lemma~\ref{lem:ce-disc-3}, $f$ gives rise to a ring homomorphism
$$
\psi: \TT'/I \to \OO/\p_K^{e(m-1)+1}
$$
such that $\psi(T_\ell) \equiv a_\ell \mod \p_K^{e(m-1)+1}$ for all primes $\ell \neq p$. Now, by Proposition \ref{prop:ce-disc-2}, $\TT'/I$ is finite.

For positive integers $s \ge r$, consider the natural surjection $\pi_{r,s}: \TT'_{\le s} \twoheadrightarrow \TT'_{\le r}$ given by restricting the Hecke operators. The $p$-deprived Hecke algebra $\TT'$ is the projective limit of the $\TT'_{\le r}$ with transition maps $\pi_{r,s}$; it comes with natural surjections $\pi_r: \TT' \twoheadrightarrow \TT'_{\le r}$. We further have that the open ideal $I$ of~$\TT'$ is the projective limit of the $\pi_r(I)$. By the exactness of projective limits on compact topological groups, we obtain that $\TT'/I$ is the projective limit of $\TT'_{\le r}/\pi_r(I)$. As $\TT'/I$ is finite, it follows that there is some $r \in \N$ bounded by a constant, depending only on $N$, $p$, and $m$, and with the property that $\TT'_{\le r}/\pi_r(I) \cong \TT'/I$. Hence, $\psi$ can be seen as a ring homomorphism $\psi: \TT'_{\le r}/\pi_r(I) \to \OO/\p_K^{e(m-1)+1}$.

We may then consider $\psi$ as a linear combination of modular forms of weights at most~$r$ that are all congruent to $k_0$ modulo $\tilde\varphi(p^m)$. By multiplying by convenient classical Eisenstein series (cf.\ for instance \cite{serre_p_adic}, p.\ 196) we can actually bring all involved forms into weight~$r$. This is the main point where we use the modified Euler function $\tilde\varphi$: the Eisenstein series in question have weight $\tilde\varphi(p^m) := \varphi(p^m) = p^{m-1}(p-1)$ when $p>2$, but weight $\tilde\varphi(2^m) := 2^{m-2}$ when $p=2$.

It follows that there is a modular form $g = \sum_n b_n q^n$ of weight $r$ on $\Gamma_1(N)$ which modulo $p^m$ is a weak eigenform outside $p$ and such that
$$
b_{\ell} \equiv a_{\ell} \pmod{p^m}
$$
for primes $\ell$ different from $p$. The form $g$ is normalized since $\psi$ is a ring homomorphism. At this point we have proved Remark~\ref{rem:weak_weight_bounds}.

By Corollary \ref{cor:strong_weight_bounds_bounded_slope} in subsection \ref{subsection:weight_bounds_fixed_slope} below, we may further assume that the $p$-slope $v_p(a_p)$ of $f$ exceeds $m-1$ so that $a_p \equiv 0 \pmod{p^m}$.

Let us now assume $p \ge 5$. Possibly we have $b_p \not\equiv 0 \pmod{p^m}$. However, let us consider the result $h = \sum_n c_n q^n$ of applying the $\theta$ operator modulo $p^m$ (with effect $\sum_n \alpha_n q^n \mapsto \sum_n n\alpha_n q^n$ on $q$-expansions) $m$ times to $g$. By \cite[Theorem 1]{ck} we know that $h$ is the reduction modulo $p^m$ of a modular form on $\Gamma_1(N)$ of weight $r + m \cdot (2 + 2p^{m-1}(p-1))$. By construction we will have $c_n \equiv b_n \equiv a_n \pmod{p^m}$ for $n$ coprime to $p$, and also $c_p \equiv 0 \equiv a_p \pmod{p^m}$. Also, $h$ is normalized since $g$ is. We conclude that $h$ is a weak eigenform on $\Gamma_1(N)$ of weight $r + m \cdot (2 + 2p^{m-1}(p-1))$, and that in fact $h\equiv f \pmod{p^m}$. Since $r$ was bounded by a function only depending on $N$, $p$, $m$, our claim follows.
\end{proof}

\section{Further results}\label{section:further_results}

\subsection{Strong weight bounds for bounded \texorpdfstring{$p$-slope}{p-slope}}\label{subsection:weight_bounds_fixed_slope} As before we fix $N \in \N$ with $p\nmid N$. Let
$$
f = \sum a_n q^n \in S_k(\Gamma_1(N),\Zbar_p )
$$
be an eigenform.
We can embed $f$ into the space $S_k(\Gamma_1(N) \cap \Gamma_0(p), \Zbar_p)$ of cusp forms of weight $k$ on $\Gamma_1(N) \cap \Gamma_0(p)$ and coefficients in $\Zbar_p$. The Atkin $U_p$ operator acts on this space with effect on $q$-expansions as $U_p(\sum b_n q^n) = \sum b_{pn} q^n$. The form $f$ gives rise to two eigenforms in $S_k(\Gamma_1(N) \cap \Gamma_0(p), \Zbar_p)$ with corresponding $U_p$-eigenvalues $\lambda,\lambda'$ that are the roots of the polynomial $x^2 - a_p x + p^{k-1}$, cf.\ \cite[Section 4]{gouvea-mazur}.

As usual the number $v_p(a_p)$ is called the $p$-slope of $f$, or simply the slope of $f$ since $p$ is fixed.
\smallskip

By the theorem on local constancy of dimensions of generalized $U_p$ eigenspaces, due to Coleman \cite[Theorem D]{coleman_banach} and, in explicit form (for $p\ge 5$), Wan \cite[Theorem 1.1]{wan}, we can see immediately that the finiteness statement corresponding to Conjecture~\ref{conj:strong_weight_bounds}, but for fixed, finite slope, is true:

\begin{prop}\label{prop:finiteness_for_fixed_slope} Fix $N$, $p$, $m$, and $\alpha \in \Q_{\ge 0}$. There is a constant $k(N,p,m,\alpha)$ depending only on $N$, $p$, $m$, $\alpha$ such that any eigenform on $\Gamma_1(N)$ of $p$-slope $\alpha$ is congruent modulo $p^m$ to an eigenform of the same type and of weight $\le k(N,p,m,\alpha)$.
\end{prop}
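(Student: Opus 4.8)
The plan is to derive this from Coleman's theory of overconvergent $p$-adic modular forms: from his classicality criterion together with the local constancy in the weight of the slope-$\le\alpha$ part of the cusp forms (\cite[Theorem D]{coleman_banach}, made effective for $p\ge 5$ by Wan \cite[Theorem 1.1]{wan}).

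First I would reduce to a statement about $U_p$-eigenforms on $\Gamma_1(N)\cap\Gamma_0(p)$. If $k\le 2\alpha+1$ there is nothing to prove, so assume $k>2\alpha+1$. Then the Newton polygon of $x^2-a_px+\epsilon(p)p^{k-1}$, with $\epsilon$ the nebentypus of $f$, has a unique root $\lambda$ of valuation $\alpha$, and the $\lambda$-stabilization $f_\lambda=f(q)-\lambda' f(q^p)$ is a $U_p$-eigenform of slope $\alpha$ and weight $k$ on $\Gamma_1(N)\cap\Gamma_0(p)$. Conversely, a classical $U_p$-eigenform of slope $\alpha$ and weight $k'>2\alpha+2$ is not $p$-new, since a $p$-new form would have $U_p$-slope $(k'-2)/2\neq\alpha$; hence it is the stabilization of a classical eigenform $g$ of weight $k'$ on $\Gamma_1(N)$, again of slope $\alpha$. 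Moreover, if in addition $k'-1-\alpha\ge m$, then a congruence $g_\mu\equiv f_\lambda\pmod{p^m}$ of $q$-expansions forces $g\equiv f\pmod{p^m}$: the coefficients of index prime to $p$ match directly, while $a_p(g)\equiv\mu\equiv\lambda\equiv a_p(f)\pmod{p^m}$ because the complementary roots $\mu'$, $\lambda'$ have valuation $\ge m$. So it suffices to find, in weight $k'$ bounded only in terms of $N,p,m,\alpha$, a classical $U_p$-eigenform of slope $\alpha$ on $\Gamma_1(N)\cap\Gamma_0(p)$ congruent to $f_\lambda$ modulo $p^m$.

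By Coleman's classicality theorem, for any weight $w$ with $w-1>\alpha$ the classical $U_p$-eigenforms of slope $\alpha$ coincide with the overconvergent ones, so $f_\lambda$ is an overconvergent slope-$\alpha$ eigenform of weight $k$. Fix the residue class $k_0$ of $k$ modulo $p-1$. By \cite[Theorem D]{coleman_banach} --- in the explicit form of \cite[Theorem 1.1]{wan} when $p\ge 5$ --- the slope-$\le\alpha$ part of overconvergent forms of level $\Gamma_1(N)\cap\Gamma_0(p)$ and weight in the disc $\{\,w\equiv k_0\pmod{(p-1)p^{\,j}}\,\}$ has constant finite rank once $j$ is large enough in terms of $\alpha$, and over this disc the Fredholm determinant of $U_p$ varies $p$-adic analytically in $w$, and with it all finite-slope Hecke eigensystems; enlarging $j$ further, in terms of $m$, we get that the slope-$\alpha$ eigensystems at any two weights of the disc are congruent modulo $p^m$. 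Now let $j=j(N,p,m,\alpha)$ be large enough that both hold, and let $k'$ be the least integer with $k'\equiv k\pmod{(p-1)p^{\,j}}$ and $k'>2\alpha+m+2$; then $k'\le (p-1)p^{\,j}+2\alpha+m+3=:k(N,p,m,\alpha)$, and the slope-$\alpha$ eigensystem of weight $k'$ matching that of $f_\lambda$ is realized by a classical $U_p$-eigenform $g_\mu$ with $g_\mu\equiv f_\lambda\pmod{p^m}$. Feeding this into the reduction step yields an eigenform $g$ on $\Gamma_1(N)$ of weight $\le k(N,p,m,\alpha)$, $p$-slope $\alpha$, with $g\equiv f\pmod{p^m}$.

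The step I expect to be the main obstacle is the passage from \cite[Theorem D]{coleman_banach} as usually quoted --- constancy of \emph{dimensions} of the slope-$\le\alpha$ spaces --- to a genuine congruence of the eigenforms modulo $p^m$. This uses the $p$-adic analytic variation of the slope-$\alpha$ Hecke eigensystems over the weight disc (the local structure of the eigencurve over weight space) together with an effective bound, in terms of $\alpha$ and $m$, on the rate of $p$-adic convergence; Wan's estimates on the Fredholm determinant of $U_p$ supply this for $p\ge 5$, while for $p\in\{2,3\}$ Coleman's non-effective statement still yields the mere existence of the constant $k(N,p,m,\alpha)$, which is all that is asserted. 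The remaining point, the bookkeeping at $p$ in the reduction step, is elementary once $k'$ is chosen large compared with $\alpha$ and $m$.
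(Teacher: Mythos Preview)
Your approach is genuinely different from the paper's, and the paper in fact explicitly discusses your route and explains why the authors did not take it.

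The paper's proof does \emph{not} try to produce a congruent form in low weight via a Coleman family. Instead it argues indirectly: local constancy of the dimension of the generalized $U_p$-eigenspace of slope~$\alpha$ (Coleman's Theorem~D, Wan's Theorem~1.1) gives a uniform bound, in terms of $N,p,\alpha$, on the dimension of the space in which the Hecke operators are simultaneously diagonalized; hence the coefficient field $K_{f,p}$ of any slope-$\alpha$ eigenform has degree over $\Q_p$ bounded in terms of $N,p,\alpha$ alone. One then runs the Hermite--Minkowski argument exactly as in Proposition~\ref{prop:conj1+2}\eqref{item:5},\eqref{item:8}, with the set of slope-$\alpha$ eigenforms in place of~$\fS(N)$, to conclude that only finitely many reductions mod~$p^m$ occur. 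This yields the constant $k(N,p,m,\alpha)$ without ever invoking families or congruences between weights.

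The gap in your argument is precisely the step you flag as the ``main obstacle'': passing from constancy of \emph{dimensions} on a disc of uniform radius (which is what Theorem~D and Wan's theorem actually assert) to a uniform congruence modulo~$p^m$ of the slope-$\alpha$ \emph{eigensystems} themselves. This amounts to asking that the radius~$t$ in Coleman's Corollary~B5.7.1 be bounded in terms of $N,p,\alpha$ only, independently of the particular form~$f_0$ through which the family passes. The paper raises exactly this as Question~\ref{question:coleman_radius} and reports that Coleman himself confirmed it is \emph{not} a consequence of the current theory of $p$-adic families. Wan's estimates control the Newton polygon of the Fredholm determinant of~$U_p$, hence the slopes and their multiplicities, but do not by themselves control the $p$-adic variation of the Hecke eigenvalues on the slope-$\alpha$ subspace; your assertion that ``Wan's estimates \ldots\ supply this'' and that ``Coleman's non-effective statement still yields the mere existence'' is therefore not justified. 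So as written, your proof is incomplete at its central step, whereas the paper's coefficient-field argument sidesteps this difficulty entirely.
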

\begin{proof} The theorem mentioned above, i.e., local constancy of dimensions of generalized $U_p$ eigenspaces (in $S_k(\Gamma_1(N) \cap \Gamma_0(p), \Zbar_p)$), \cite[Theorem D]{coleman_banach}, \cite[Theorem 1.1]{wan}, implies that the dimension of any such eigenspace is bounded above by a function depending only on the data $N$, $p$, and $\alpha \in \Q_{\ge 0}$. As a consequence, if $f$ is an eigenform on $\Gamma_1(N)$ of fixed $p$-slope $\alpha$ then the field of coefficients $K_{f,p}$ has degree over $\Q_p$ bounded by a function of $N$, $p$, and $\alpha \in \Q_{\ge 0}$ (because the eigenspace in question is stabilized by the Hecke operators and thus the eigenvalues in question arise via diagonalizing Hecke operators in a space of bounded dimension.)

Now, the proof of Proposition~\ref{prop:conj1+2}, specifically the argument provided for item (\ref{item:8}) of that proposition, applied to the set of slope~$\alpha$ eigenforms on~$\Gamma_1(N)$ instead of $\fS(N)$ shows the claim.
\end{proof}

\begin{cor}\label{cor:strong_weight_bounds_bounded_slope} Fixing $N$, $p$, $m$, and $b \in \Q_{\ge 0}$ there is only a finite number of reductions modulo $p^m$ of eigenforms on $\Gamma_1(N)$ of $p$-slope bounded by $b$.
\end{cor}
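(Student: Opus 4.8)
\subsection*{Proof proposal}

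The plan is to rerun the proof of Proposition~\ref{prop:finiteness_for_fixed_slope} essentially verbatim, replacing the condition ``slope exactly $\alpha$'' by ``slope at most $b$'' throughout; the one place that needs genuinely new input is the required \emph{uniformity in the weight} of a dimension bound. Recall the structure of that proof. An eigenform $f=\sum_n a_n q^n$ on $\Gamma_1(N)$ of weight~$k$ and slope $v_p(a_p)\le b$ embeds into $S_k(\Gamma_1(N)\cap\Gamma_0(p),\Zbar_p)$, where it gives rise to two $p$-stabilizations whose $U_p$-eigenvalues $\lambda,\lambda'$ are the roots of $x^2-a_px+p^{k-1}$; since $v_p(\lambda)+v_p(\lambda')=k-1$, at least one of $\lambda,\lambda'$ has valuation $\le v_p(a_p)\le b$. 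That $p$-stabilization lies in the subspace $S_k^{\le b}\subseteq S_k(\Gamma_1(N)\cap\Gamma_0(p),\Qbar_p)$ spanned by the generalized $U_p$-eigenspaces for eigenvalues of valuation $\le b$, and it has the same Hecke eigenvalues as $f$ at all primes $\ell\nmid Np$. As $S_k^{\le b}$ is stable under the full Hecke algebra (including $U_p$ and the diamond operators), a bound $\dim_{\Qbar_p}S_k^{\le b}\le C(N,p,b)$ that is \emph{independent of $k$} forces all the eigenvalues of these commuting operators on that $p$-stabilization --- in particular all $a_\ell(f)$ with $\ell\nmid Np$, hence by Lemma~\ref{lem:coeffs_ND} the whole field $K_{f,p}$ --- to generate an extension of $\Q_p$ of degree bounded in terms of $C(N,p,b)$.

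Granting such a bound, one finishes exactly as in the argument given for item~\eqref{item:8} of Proposition~\ref{prop:conj1+2}, now applied to the set of eigenforms on $\Gamma_1(N)$ of slope $\le b$ rather than to $\fS(N)$: $e_{f,p}$ and $\mathrm{res}_{f,p}$ are then uniformly bounded, so the finite rings $C_m(f)$ have bounded cardinality; each $\rho_{f,p,m}$ has image in one fixed finite group $\GL_2(\OO/\p^\gamma)$; Hermite--Minkowski bounds the number of number fields that can be cut out (unramified outside $Np$, of bounded degree) and hence the number of such Galois representations, which pins down the $a_\ell\bmod p^m$ for $\ell\nmid Np$; and the finitely many bad-prime coefficients each vary over a bounded set. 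Hence there are only finitely many reductions modulo $p^m$ of the $q$-expansions of such eigenforms, which is the assertion of the Corollary.

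Thus the whole burden falls on the uniform bound $\dim_{\Qbar_p}S_k^{\le b}\le C(N,p,b)$, and I expect this to be the main obstacle. For classical weights it suffices (via Coleman's classicality theorem for slopes $<k-1$) to bound the slope-$\le b$ part of \emph{overconvergent} forms of weight $k$; on each of the finitely many disc-shaped components of weight space for tame level~$N$ this dimension is locally constant by \cite[Theorem~D]{coleman_banach} (made explicit for $p\ge5$ in \cite[Theorem~1.1]{wan}), and one upgrades this to a global bound either from Wan's uniform-in-weight lower bounds on the valuations of the coefficients of the relevant Fredholm determinants, or from the fact that the bounded-slope eigenvariety of tame level~$N$ is finite over weight space (so its fibres over classical weights have cardinality at most the constant degree of that finite map). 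The delicate point is precisely this passage from a statement that is local on weight space to one valid for \emph{all} weights simultaneously, not merely for $p$-adically nearby ones; once that is in hand, everything else is a repetition of arguments already present in the paper.
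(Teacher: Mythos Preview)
Your argument can be made to work, but it takes a substantially harder route than the paper does. The paper's proof is one sentence: the set of \emph{all} $p$-slopes of eigenforms on $\Gamma_1(N)$ (as the weight varies) is a discrete subset of~$\R$ (Coleman, remark after Theorem~B3.4 in \cite{coleman_banach}); hence there are only finitely many slopes $\alpha_1,\dots,\alpha_r\le b$, and one simply applies Proposition~\ref{prop:finiteness_for_fixed_slope} once for each~$\alpha_i$ and takes the union.

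What you do instead is attempt to prove directly a uniform bound $\dim_{\Qbar_p}S_k^{\le b}\le C(N,p,b)$ valid for all weights~$k$, and you correctly flag that the passage from local constancy to a genuinely global bound is the delicate point. That bound is true and can be extracted from the finiteness of the slope-$\le b$ part of the eigencurve over weight space, but this is heavier machinery than needed, and your sketch of it is not quite complete (for instance, finiteness of a morphism does not by itself give a constant fibre cardinality; one needs a bit more, though with only finitely many components of weight space this is not a real obstruction). The paper sidesteps the whole issue: once you know the slope set is discrete, boundedness by~$b$ gives a \emph{finite} set of slopes, and the already-proved fixed-slope statement finishes the job. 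Your reduction from the uniform dimension bound to the finiteness of reductions mod~$p^m$ is fine and matches the paper's style of argument; it is only the way you propose to obtain that bound that is more circuitous than necessary.
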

\begin{proof} This follows from Proposition \ref{prop:finiteness_for_fixed_slope} since the set of $p$-slopes of eigenforms on $\Gamma_1(N)$ is a discrete subset of $\R$, cf.\ \cite{coleman_banach}, remark after Theorem B3.4.
\end{proof}

One can ask for possible explicit values for $k(N,p,m,\alpha)$. In the preprint \cite{mahnkopf1}, J.\ Mahnkopf uses an analysis of the trace formula to embed modular forms of fixed, finite slope into families, albeit not in the rigid analytic sense. His Theorem G of that preprint implies the existence of a constant $k(N,p,m,\alpha)$ as above, and in fact gives a possible explicit value. We will not quote that value as it is a bit involved, but only note that it depends on the dimensions of certain generalized $U_p$ eigenspaces. One is led to the following question.

\begin{question} What is the optimal shape of the above constant $k(N,p,m,\alpha)$?
\end{question}

Additionally, we would like to make the following remark. In connection with this question it is natural first of all to think about utilizing Coleman's theory of $p$-adic families of modular forms. However, it seems that the current state of this theory does not lead to an answer to the above question, at least not in any immediately obvious way. Let us briefly explain this point.

The following statement is a consequence of Coleman's theory (see \cite[Corollary B5.7.1]{coleman_banach}; the proof is only sketched in \cite{coleman_banach}, but see \cite[Section 2]{yamagami} for a detailed proof):

Suppose that $f_0\in S_{k_0}(\Gamma_1(N) \cap \Gamma_0(p), \Zbar_p)$ is a $p$-new eigenform of slope $\alpha$ and with $k_0 > \alpha + 1$. Then there exists $t\in\N$ such that the following holds: whenever $m,k\in \N$ where $k > \alpha + 1$ and
$$
k \equiv k_0 \pmod{p^{m+t}(p-1)}
$$
there exists a $p$-new eigenform $f\in S_k(\Gamma_1(N) \cap \Gamma_0(p), \Zbar_p)$ of slope $\alpha$ such that
$$
f \equiv f_0 \pmod{p^m} .
$$

We can refer to the number $t$ as the ``radius'' of the $p$-adic analytic family passing through $f_0$. The following question naturally arises.

\begin{question}\label{question:coleman_radius} Is the above radius bounded above by a constant depending on $N$, $p$, $\alpha$, but not on $f_0$?
\end{question}

Clearly, an affirmative answer to this question together with an explicit upper bound would instantly lead to an explicit possible value for the above constant $k(N,p,m,\alpha)$.

However, the late Robert Coleman confirmed in an email exchange (August 2013) with the first author that current knowledge about the properties of $p$-adic analytic families of modular forms does not warrant an affirmative answer to Question \ref{question:coleman_radius}.

\subsection{Weak weight bounds modulo \texorpdfstring{$2^m$}{2m} at level 1}\label{subsection:weak weight bounds mod 2^m} As we explained in \ref{subsection:summary,N=1,p=2} above, Theorem \ref{thm:weightbound} was proved before Theorem \ref{thm:weak_weight_bounds} and is of course a very weak version of Theorem \ref{thm:weak_weight_bounds}. We have chosen to retain this theorem though, because of the explicit weight bounds that become possible with this method of proof, see \ref{subsubsection:explicit bounds} below. This raises the obvious question about whether explicit weight bounds in Theorem \ref{thm:weak_weight_bounds} can be obtained via a generalization of this method.

As Theorem \ref{thm:weightbound} is by now superseded by Theorem \ref{thm:weak_weight_bounds}, we shall restrict ourselves to summarizing parts of the arguments leading to Theorem \ref{thm:weightbound}. We refer the interested reader to the preprint version \cite{krw_preprint} of this paper for full details.
\smallskip

For the arguments we need to bring in some standard Eisenstein series on $\SL_2(\Z)$ and discuss the full algebra of modular forms on $\SL_2(\Z)$. However, as above, we will still reserve the word ``eigenform'' for ``normalized cuspidal eigenform for all Hecke operators'' since this is our main focus.

Denote as usual by $Q:=E_4$ and $R:=E_6$ the normalized Eisenstein series on $\SL_2(\Z)$ of weight $4$ and $6$, respectively, and by $\Delta$ Ramanujan's form of weight $12$. We write $M_k(A)$ for the set of modular forms on $\SL_2(\Z)$ of weight $k$ and coefficients in a commutative ring $A$. We remind the reader that we are using the naive definition of ``coefficients in $A$'' throughout this paper, so that we have $M_k(A) = M_k(\Z) \otimes A$. Similarly, $S_k(A)$ will denote cusp forms on $\SL_2(\Z)$ with coefficients in $A$.

Given an even integer $k\ge 4$, one knows, cf.\ for instance \cite[Theorem X.4.3]{lang}, that the forms of shape
$$
\left\{ \begin{array}{ll} Q^a \Delta^c & \mbox{ if } k\equiv 0 \pmod{4} \\
R Q^a \Delta^c & \mbox{ if } k\equiv 2 \pmod{4}\end{array} \right.
$$
with $4a+12c=k$ ($k\equiv 0 \pmod{4}$) and $4a+12c=k-6$ ($k\equiv 2 \pmod{4}$), form a basis for the space of modular forms of weight $k$ with coefficients in $\Z$, i.e., every modular form with $q$-expansion in $\Z[[q]]$ is a $\Z$-linear combination of the above basis forms.

Consequently, if $f\in M_k(\Z /2^m \Z)$ we can write
$$
f = \sum_{4a+12c=k} \alpha_{a,c} Q^a \Delta^c
$$
if $k\equiv 0 \pmod{4}$, and
$$
f = R\cdot \sum_{4a+12c=k-6} \alpha_{a,c} Q^a \Delta^c
$$
if $k\equiv 2 \pmod{4}$, with certain coefficients $\alpha_{a,c} \in \Z /2^m \Z$.

Here, we have abused notation slightly, denoting again by $Q$, $R$, $\Delta$ the images in the appropriate $M_k(\Z /2^m \Z)$ of the forms $Q$, $R$, $\Delta$.

In this expansion of $f$ the coefficients $\alpha_{a,c} \in \Z /2^m \Z$ are uniquely determined. Thus, we can make the following definition.

\begin{dfn} For $f\in S_k(\Z /2^m \Z)$ define the degree $\deg_m{f}$ of $f$ to be the highest power of $\Delta$ occurring in the expansion of $f$ as above.
\end{dfn}

In situations where $m$ does not vary and it is clear what it is, we may suppress the $m$ from the notation and just write $\deg f$ for $\deg_m f$.

For simplicity of notation we have chosen to formulate and prove the next theorem for strong eigenforms that are reductions of eigenforms with coefficients in $\Z_2$. However, the statement (with the same constant $C(m)$) holds more generally for strong eigenforms that are reductions of forms with coefficients in the ring of integers of the maximal unramified extension of $\Q_2$. The proof is, mutatis mutandis, the same as the one that follows below. See also Remark \ref{remark:thm_N=1,p=2} below.

\begin{thm}\label{thm:weightbound} There exists a constant $C(m)$ depending only on $m$ such that the following holds.

Whenever $f\in S_k(\Z/2^m \Z)$ is a strong eigenform modulo $2^m$ that is the reduction of an eigenform with coefficients in $\Z_2$ then
$$
\deg_m{f} \leq C(m).
$$

Any such form is the reduction modulo $2^m$ of a form of weight bounded by a constant $\kappa(m)$ depending only on $m$, that can be taken to be $12C(m)$ for $m=1,2,3$, and to be $6 + 2^{m-2} + 12 C(m)$ if $m\ge 4$.
\end{thm}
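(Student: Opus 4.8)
The plan is to show that the degree $\deg_m f$ of a strong eigenform $f\in S_k(\Z/2^m\Z)$ lifted from $\Z_2$ is bounded, by an inductive argument on $m$ whose core step reduces a congruence modulo $2^m$ to one modulo $2^{m-1}$. First I would dispose of the base case: by Jochnowitz (statement $\mathbf{Strong}_1$) — or, at level $1$, by Hatada's result that the only strong eigenform modulo $2$ is $\bar\Delta$ — one has $\deg_1 f = 1$, so $C(1)=1$ works. For the inductive step, suppose $f$ is the reduction modulo $2^m$ of an eigenform $F\in S_k(\Z_2)$. Reducing $F$ further modulo $2^{m-1}$ gives a strong eigenform to which the inductive hypothesis applies; writing $f = \sum_{4a+12c=k}\alpha_{a,c}Q^a\Delta^c$ in the basis, the monomials with $c > C(m-1)$ must all have coefficients $\alpha_{a,c}$ divisible by $2^{m-1}$. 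The key structural input is then the theory of the mod-$2$ (and mod-$2^m$) filtration together with the $\theta$-operator, in the form developed by Serre and Nicolas for level $1$ and $p=2$: one controls how the high-degree tail $2^{m-1}\cdot(\text{something})$ can be rewritten, using that multiplication by $\Delta$ and the action of $\theta$ modulo $2^{m-1}$ on the relevant space of forms eventually become periodic, so that a form of very large degree that is congruent to an eigenform modulo $2^m$ can be replaced by one of strictly smaller degree congruent modulo $2^m$ to the same thing.

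\smallskip

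The heart of the argument is the inductive bound on the degree. Concretely, I would argue by contradiction: if $\deg_m f$ were larger than a suitable explicit function $C(m)$ of $C(m-1)$, then after subtracting off the low-degree part (controlled by the inductive hypothesis) one is left with $2^{m-1}$ times a form $g$ of high degree which, modulo $2$, must itself be an eigenform — or at least lie in the Hecke-span of eigenforms — in level $1$, and Hatada's/Jochnowitz's result pins down $\bar g$, forcing $\deg(g \bmod 2)$ to be small; combining with the known behaviour of the filtration under $\theta$ and the structure of $S_k(\F_2)$ (using $E_4\equiv 1$, $E_6\equiv 1 \pmod{2}$, so that $M_*(\F_2)=\F_2[\Delta]$ effectively) gives a contradiction with the assumed large degree. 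This is the step I expect to be the main obstacle: one needs a genuinely effective handle on how the $\Z/2^m$-algebra of modular forms is generated in high weights and on the $\theta$-operator modulo $2^m$ (iterates of $\theta$ raise the filtration in a controlled way, cf.\ the mod-$2^m$ $\theta$-bounds of \cite{ck}), and making the bookkeeping uniform in $m$ — so that $C(m)$ depends only on $m$ and the recursion $C(m) = F(C(m-1))$ closes up — is the delicate part.

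\smallskip

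Once the degree bound $\deg_m f \le C(m)$ is in hand, the weight bound is essentially a matter of bounding the weight of some form of that degree that reduces to $f$. If $f$ itself occurs in weight $k$ with $\deg_m f = c \le C(m)$, write $f = Q^a\Delta^c$-combination with $4a+12c = k$ (or the analogous expression with a factor $R$ in weight $\equiv 2 \bmod 4$); this does not directly bound $k$, since $a$ can be large, but one can replace $Q^a$ by $1$ modulo $2$ (as $Q=E_4\equiv 1 \pmod{2}$) — more carefully, modulo $2^m$ one replaces $Q$ by a power of the relevant mod-$2^m$ Eisenstein series of weight $\tilde\varphi(2^m)=2^{m-2}$, cf.\ \cite{serre_p_adic} — thereby producing a form of the same reduction modulo $2^m$ and of weight at most $6 + 2^{m-2} + 12C(m)$ (the $6$ accounting for a possible factor $R$, the $2^{m-2}$ for one twist by the Eisenstein series, and $12C(m)$ for the $\Delta$-part). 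For $m \le 3$ one has $\tilde\varphi(2^m) \le 2$ and $Q\equiv R^{?}$-type congruences modulo $2^m$ are sharp enough that one can stay in weight $12C(m)$, giving $\kappa(m) = 12C(m)$ in those cases. Thus $\kappa(m)$ can be taken to be $12C(m)$ for $m=1,2,3$ and $6 + 2^{m-2} + 12C(m)$ for $m\ge 4$, as claimed; the remaining effort is purely to verify these Eisenstein-series replacements modulo $2^m$, which is a routine computation in the ring of modular forms over $\Z/2^m\Z$.
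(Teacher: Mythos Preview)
Your overall architecture --- induction on $m$, base case via Hatada, writing $f \equiv g_1 + 2^{m-1}h \pmod{2^m}$ with $g_1$ of bounded degree coming from the inductive hypothesis, and then trying to bound $\deg_1(h \bmod 2)$ --- matches the paper. The derivation of the weight bound $\kappa(m)$ from the degree bound $C(m)$ is also essentially right (the paper uses $Q\equiv 1\pmod{2^4}$ and $Q\equiv R\equiv 1\pmod{2^3}$ rather than the weight-$\tilde\varphi(2^m)$ Eisenstein series, but that is a cosmetic difference).

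The genuine gap is in how you bound $\deg_1(h\bmod 2)$. You assert that $h\bmod 2$ ``must itself be an eigenform --- or at least lie in the Hecke-span of eigenforms'' and that Hatada then pins it down. This is false: there is no reason whatsoever for the tail $h$ to be an eigenform modulo~$2$, and ``lying in the Hecke-span of eigenforms'' gives no control (in $\F_2[\Delta]$ the only eigenform is $\Delta$, and a generic $h$ is certainly not a scalar multiple of it). The $\theta$-operator and filtration considerations you invoke are likewise not what drives this proof; $\theta$ does not appear in the argument for Theorem~\ref{thm:weightbound} at all.

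What the paper actually does is exploit the eigenform property of $f$, not of $h$. Since $T_\ell f\equiv\lambda_\ell f$ and, by Hatada, $\lambda_\ell\equiv 0\pmod 2$ for $\ell\in\{2,3,5\}$, one gets $2^{m-1}T_\ell h\equiv \lambda_\ell g_1 - T_\ell g_1\pmod{2^m}$; the right-hand side has degree bounded in terms of $C(m-1)$, so $\deg_1(T_\ell h)$ is bounded. The crucial missing ingredient in your sketch is then the Serre--Nicolas input (Proposition~\ref{prop:oddbound}, based on \cite{SNnil}): for \emph{odd} $h\in\F_2[\Delta]$, a bound on $\max(\deg T_3 h,\deg T_5 h)$ forces a bound $N(\cdot)$ on $\deg h$. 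The even part is handled separately by writing $h_e=V(\phi)$ and using $T_2\equiv U\pmod 2$ so that $T_2 h_e=\phi$. Without this nilpotence-type result there is no way to pass from control of $T_\ell h$ to control of $h$ itself, and your proposal does not supply a substitute.
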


The proof of Theorem \ref{thm:weightbound} makes use of the following theorem of Hatada, see \cite[Theorem 1]{hatada1977a} (and \cite{hatada1977b} for further results).

\begin{prop}[Hatada]\label{prop:hatada1} Let $f \in S_k(SL_2(\Z))$ be an eigenform. Then for any prime $p$:
$$
a_p(f) \equiv 0 \pmod{2}.
$$

Consequently, we have $(f \pmod{2}) = (\Delta \pmod{2})$.
\end{prop}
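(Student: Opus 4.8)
The plan is to reduce modulo a prime above~$2$ and exploit the very rigid structure of level-$1$ modular forms modulo~$2$. Fix a prime $\fm$ of the coefficient field $K_f$ above~$2$ (compatible with $\Qbar\hookrightarrow\Qbar_2$), write $\Fbar_2$ for its residue field, and let $\bar f\in S_k(\SL_2(\Z),\Fbar_2)$ be the reduction of~$f$; it is a normalized eigenform for all Hecke operators. Since $E_4\equiv E_6\equiv 1\pmod 2$ (because $240\equiv 504\equiv 0\pmod 2$), the integral basis recalled above from \cite[Theorem~X.4.3]{lang} shows that $S_k(\SL_2(\Z),\Fbar_2)$ has $\Fbar_2$-basis $\bar\Delta,\bar\Delta^2,\dots,\bar\Delta^d$ with $d=\dim_{\Q}S_k(\SL_2(\Z))$; these are linearly independent since $\bar\Delta^c$ has leading term $q^c$. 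So I would write $\bar f=\sum_{c=1}^d\lambda_c\bar\Delta^c$ with $\lambda_1=1$ and pin down the $\lambda_c$ from the eigenform condition.

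First I would treat $p=2$. On $q$-expansions, with $U_2(\sum b_nq^n)=\sum b_{2n}q^n$ and $V_2(\sum b_nq^n)=\sum b_nq^{2n}$, one has $T_2=U_2+2^{k-1}V_2\equiv U_2\pmod 2$. Using Jacobi's identity $\eta^3=\sum_{n\ge 0}(-1)^n(2n+1)q^{(2n+1)^2/8}$ and $\Delta=(\eta^3)^8$, together with the fact that raising to the $8$th power is the cube of Frobenius modulo~$2$, one obtains the classical congruence $\bar\Delta=\sum_{n\ge 1,\ n\ \mathrm{odd}}q^{n^2}$. Hence $U_2\bar\Delta=0$; since $V_2$ is a ring endomorphism coinciding with Frobenius modulo~$2$, we get $\bar\Delta^{2c}=V_2(\bar\Delta^c)$, so $U_2(\bar\Delta^{2c})=\bar\Delta^c$, whereas $U_2(\bar\Delta^{2c+1})=U_2(\bar\Delta\cdot V_2\bar\Delta^c)=(U_2\bar\Delta)\cdot\bar\Delta^c=0$. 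Comparing $T_2\bar f=a_2(f)\bar f$ in the basis $\bar\Delta^c$ gives $\lambda_{2c}=a_2(f)\lambda_c$ for all~$c$; running the chain $\lambda_2=a_2(f)$, $\lambda_4=a_2(f)^2,\dots$ against $\lambda_c=0$ for $c>d$ forces $a_2(f)\equiv 0\pmod 2$ and then $\lambda_c=0$ for every even~$c$.

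The main obstacle is the odd primes~$\ell$: one must know that $\bar\Delta$ is the \emph{only} normalized eigenform of level~$1$ modulo~$2$, equivalently that the level-$1$ Hecke algebra modulo~$2$ is local with every $T_\ell$ ($\ell\neq 2$) in its maximal ideal, so that $T_\ell$ acts nilpotently on the finite-dimensional space $S_k(\SL_2(\Z),\Fbar_2)$ and any eigenform there has $a_\ell\equiv 0\pmod 2$. This is exactly Hatada's theorem \cite[Theorem~1]{hatada1977a} (and fits the structural picture of mod-$2$ level-$1$ forms going back to Swinnerton-Dyer), and in the body of the paper we simply invoke it; to \emph{prove} it directly one can combine the relations $T_\ell F=FT_\ell$ modulo~$2$ (where $F$ is Frobenius) and $T_\ell\bar\Delta=0$ (immediate from $\bar\Delta=\sum_{n\ \mathrm{odd}}q^{n^2}$, a prime never being a square) with an induction on $\deg$ using $\bar\Delta^{2c+1}=\bar\Delta\cdot F(\bar\Delta^c)$. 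A Galois-theoretic alternative for the odd primes: $\det\rhobar_{f,2}$ is a power of the mod-$2$ cyclotomic character, which is trivial since $(\Z/2)^\times=1$, so $\rhobar_{f,2}$ maps to $\GL_2(\Fbar_2)$ with trivial determinant and is unramified outside~$2$; by a theorem of Tate there is no irreducible such representation, hence $\rhobar_{f,2}$ is reducible, its Jordan--H\"older factors are characters of $2$-power conductor valued in $\Fbar_2^\times$ and so trivial, whence $a_\ell(f)=\Tr\rhobar_{f,2}(\Frob_\ell)\equiv 2\equiv 0\pmod 2$ for all odd~$\ell$.

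Finally, the ``consequently'' is elementary bookkeeping: with $a_1(f)=1$ and $a_p(f)\equiv 0\pmod 2$ for every prime~$p$, the recursion $a_{p^{r+1}}=a_p a_{p^r}-p^{k-1}a_{p^{r-1}}$, together with $p\equiv 1\pmod 2$ for odd~$p$ (so $p^{k-1}\equiv 1$) and $2^{k-1}\equiv 0\pmod 2$ (as $k\ge 12$), plus multiplicativity of the $a_n$, give $a_n(f)\equiv 1\pmod 2$ precisely when $n$ is an odd perfect square and $a_n(f)\equiv 0\pmod 2$ otherwise; this is the $q$-expansion of $\bar\Delta$ found above, so $(f\bmod 2)=(\Delta\bmod 2)$. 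The hard part, as indicated, is the odd-prime case, i.e.\ the uniqueness of the mod-$2$ level-$1$ eigensystem, which is the statement we take from Hatada.
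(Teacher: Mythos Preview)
The paper does not give a proof of this proposition: it is stated as Hatada's theorem with a reference to \cite[Theorem~1]{hatada1977a} and then used as a black box. Your proposal therefore goes well beyond what the paper does, and you correctly flag that the odd-prime case is where the genuine content lies.

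Your arguments are essentially sound. The treatment of $p=2$ via $T_2\equiv U_2$, the classical $q$-expansion $\bar\Delta=\sum_{n\ \mathrm{odd}}q^{n^2}$, and the identities $U_2(\bar\Delta^{2c})=\bar\Delta^c$, $U_2(\bar\Delta^{2c+1})=0$ is clean: the chain $\lambda_{2^r}=a_2(f)^r$ eventually vanishes in the field $\Fbar_2$, forcing $a_2(f)\equiv 0$. The ``consequently'' paragraph is routine and correct. For the odd primes, your Galois-theoretic route via Tate's theorem is the one that actually stands on its own: any character $G_\Q\to\Fbar_2^\times$ unramified outside~$2$ has odd-order image inside the $2$-group $\Gal(\Q(\zeta_{2^\infty})/\Q)$ and is hence trivial, so the semisimplification of $\rhobar_{f,2}$ is $1\oplus 1$ and $a_\ell(f)\equiv\Tr\rhobar_{f,2}(\Frob_\ell)\equiv 0$.

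Two small cautions. First, your justification ``a prime never being a square'' literally only gives $a_\ell(\bar\Delta)=0$, i.e.\ $a_1(T_\ell\bar\Delta)=0$; to conclude $T_\ell\bar\Delta=0$ you should add that $T_\ell\bar\Delta$ lies in the one-dimensional space $S_{12}(\Fbar_2)=\Fbar_2\cdot\bar\Delta$. Second, your combinatorial sketch for odd~$\ell$ (``combine $T_\ell F=FT_\ell$ and $T_\ell\bar\Delta=0$ with an induction on $\deg$'') is more optimistic than the situation warrants: the Frobenius relation handles even powers of $\bar\Delta$, but there is no simple product formula for $T_\ell(\bar\Delta\cdot F(g))$ when $\ell\neq 2$, so the passage to odd powers is not a one-line induction --- it is essentially the Nicolas--Serre nilpotence analysis \cite{SNnil} that the paper invokes elsewhere. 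Since you already have the Tate argument, this does not affect the correctness of your overall proposal.
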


\subsubsection{Serre-Nicolas codes} In this subsection we work exclusively with modular forms mod $2$ on $\SL_2(\Z)$.

As $Q \equiv R \equiv 1 \pmod{2}$, the algebra of modular forms mod $2$ of level $1$ is $\F_2[\Delta]$. We call an element of $\F_2[\Delta]$ {\it even} resp.\ {\it odd} if the occurring powers of $\Delta$ all have even resp.\ odd exponents. By \cite{SNnil}, section 2.2, the subspaces of even and odd elements are both invariant under the action of every Hecke operator $T_{\ell}$ where $\ell$ is an odd prime. If $f\in \F_2[\Delta]$ we can write, in a unique fashion,
$$
f = f_e + f_o
$$
where $f_e$ and $f_o$ are even and odd, respectively.

The main, and in fact decisive, ingredient in the proof of Theorem \ref{thm:weightbound} is the following proposition that can be proved on the basis of Serre--Nicolas' theory, particularly Propositions 4.3 and 4.4 of \cite{SNnil}. We skip the details here and refer the reader to \cite{krw_preprint} for full details.

\begin{prop}\label{prop:oddbound} For every odd integer $k \geq 0$, there exists a constant $N(k)$ depending only on $k$ such that, whenever $f\in \F_2[\Delta]$ is odd with
$$
\sup\{\deg{T_3(f)}, \deg{T_5(f)}\} \leq k,
$$
then:
$$
\deg{f} \leq N(k).
$$
\end{prop}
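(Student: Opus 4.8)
The plan is to work throughout in the subspace $\F_2[\Delta]^{\mathrm{odd}}$ of odd elements of $\F_2[\Delta]$. This is legitimate because, as recalled above (\cite{SNnil}, section 2.2), every Hecke operator $T_\ell$ with $\ell$ an odd prime --- in particular $T_3$ and $T_5$ --- preserves the splitting $g = g_e + g_o$, so the hypotheses and the conclusion of the proposition only involve odd elements. Note also that Hecke operators never raise the degree: $T_\ell(\Delta^n)$ lies in the weight-$12n$ space $M_{12n}(\F_2) = \langle 1, \Delta, \dots, \Delta^n\rangle$, so $\deg T_\ell g \le \deg g$ for all $g$; the content of the proposition is the reverse inequality --- that $\deg f$ cannot be much larger than $\deg T_3 f$ and $\deg T_5 f$. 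Phrased differently, for fixed $k$ the set $\{ f \in \F_2[\Delta]^{\mathrm{odd}} : \deg T_3 f \le k,\ \deg T_5 f \le k \}$ is an $\F_2$-subspace, and the proposition says it is contained in $\langle 1, \Delta, \dots, \Delta^{N(k)}\rangle$.

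The engine is the Serre--Nicolas analysis of the action of $T_3$ and $T_5$ on $\F_2[\Delta]^{\mathrm{odd}}$ via their combinatorial ``codes'', made quantitative in Propositions 4.3 and 4.4 of \cite{SNnil}; recall that $T_3$ and $T_5$ topologically generate the full mod-$2$ Hecke algebra $\mathbf{A} \cong \F_2[[T_3, T_5]]$, so these two operators genuinely control everything. What one extracts from those propositions is the following quantitative statement: there are a constant $d_0$ and an explicit increasing function $\phi \colon \N \to \N$ with $\phi(n) \to \infty$ such that every $f \in \F_2[\Delta]^{\mathrm{odd}}$ with $\deg f = n > d_0$ satisfies $\max\{\deg T_3 f, \deg T_5 f\} \ge \phi(n)$. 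Establishing this is where the Serre--Nicolas machinery does its real work: it must be shown both that the leading term $\Delta^n$ of $f$ is not annihilated by both $T_3$ and $T_5$ (control of the joint kernel, which Serre--Nicolas show is small) and that no cancellation among the images of the lower-degree terms of $f$ can destroy the resulting high-degree contribution to $T_3 f$ or $T_5 f$ --- and it is precisely for this cancellation control that one passes from the bare $\Delta$-degree to the code formalism of \cite{SNnil}.

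Granting this, the proposition is immediate: if $f \in \F_2[\Delta]^{\mathrm{odd}}$ satisfies $\deg T_3 f \le k$ and $\deg T_5 f \le k$ and we set $n = \deg f$, then either $n \le d_0$, or $k \ge \max\{\deg T_3 f, \deg T_5 f\} \ge \phi(n)$ forces $n \le \phi^{-1}(k)$; hence $N(k) := \max\{d_0, \phi^{-1}(k)\}$ works and is an explicit function of $k$. The main obstacle is therefore entirely contained in the displayed quantitative statement of the previous paragraph --- the bound $\max\{\deg T_3 f, \deg T_5 f\} \ge \phi(n)$ with its control of cancellation --- which is the substance of Propositions 4.3 and 4.4 of \cite{SNnil}; turning the qualitative Serre--Nicolas analysis into the explicit increasing function $\phi$, and hence into an explicit value of $N(k)$, is the detailed bookkeeping carried out in \cite{krw_preprint}.
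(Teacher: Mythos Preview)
Your proposal is correct and follows the same route the paper indicates: the paper itself does not give a proof of this proposition but only states that it ``can be proved on the basis of Serre--Nicolas' theory, particularly Propositions 4.3 and 4.4 of \cite{SNnil}'' and refers to the preprint \cite{krw_preprint} for full details. Your sketch --- reducing to the odd subspace, extracting from \cite{SNnil} a quantitative lower bound $\max\{\deg T_3 f,\deg T_5 f\}\ge \phi(\deg f)$ via the code formalism, and then inverting $\phi$ --- is exactly the shape of argument the paper points to, and is in fact more explicit than what the published version contains.
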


Before the proof of Theorem \ref{thm:weightbound} we also need the following statement that is an immediate consequence of \cite[Th\'{e}or\`{e}me 1]{serre_p_adic}.

\begin{thm}\label{thm:weight_congruence} Let $f$ and $g$ be modular forms on $\SL_2(\Z)$ with coefficients in $\Z_2$ and weights $k$ and $k_1$, respectively.

Assume that at least one of the coefficients of $f$ is a unit and that we have $f \equiv g \pmod{2^m}$ for some $m\in\N$. Then
$$
k \equiv k_1 \pmod{2^{\alpha(m)}}
$$
where
$$
\alpha(m) = \left\{ \begin{array}{ll} 1 & \mbox{ if } m \le 2\\
m-2 & \mbox{ if } m \ge 3. \end{array} \right.
$$
\end{thm}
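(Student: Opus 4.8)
The plan is to reduce the statement to Serre's fundamental congruence for $p$-adic modular forms of level $1$, namely \cite[Théorème 1]{serre_p_adic}, which says: if $f$ is a modular form on $\SL_2(\Z)$ with $2$-integral coefficients, at least one of which is a $2$-adic unit, and if $f \equiv f' \pmod{2^m}$ for modular forms $f$, $f'$ of weights $k$, $k'$ respectively, then $k \equiv k' \pmod{(2-1)2^{m-1}} = 2^{m-1}$ — except that this must be corrected at the prime $2$ to account for the fact that the group $(\Z/2^m\Z)^\times$ is not cyclic, giving a smaller modulus. The point is that the relevant modulus is the exponent of $(\Z/2^m\Z)^\times$, which is $1$ for $m\le 2$ (since $(\Z/2\Z)^\times$ is trivial and $(\Z/4\Z)^\times\cong\Z/2\Z$, and $2^{m-2}$ degenerates to $2^{-1}$ which should be read as $1$), and $2^{m-2}$ for $m\ge 3$. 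So $\alpha(m)$ is precisely $v_2$ of the exponent of $(\Z/2^m\Z)^\times$, suitably read.

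Concretely, first I would recall the setup from Serre: the weight of a $2$-adic modular form takes values in the space $X = \varprojlim \Z/(2-1)2^{n-1}\Z$ of $2$-adic weights, and two classical forms that are congruent mod $2^m$ have weights congruent in $X/2^m X$, i.e. mod the image of $2^m$ in that quotient. The arithmetic of $X$ at $p=2$ is the only subtlety: one has $X \cong \Z_2 \times \Z/2\Z$ (rather than $\Z_2 \times \Z/(p-1)\Z$ as for odd $p$), and the natural filtration by powers of $2$ on the weight must be computed in this group. Working this out shows that the ``depth'' to which a weight is pinned down by a congruence mod $2^m$ is $2^{m-2}$ for $m\ge 3$, and trivial for $m\le 2$; this is exactly the content of the exceptional cases that Serre flags for $p=2$.

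The execution is then routine: given $f\equiv g\pmod{2^m}$ with $f$ having a unit coefficient, apply Serre's theorem to conclude $k\equiv k_1$ modulo the appropriate modulus, and identify that modulus as $2^{\alpha(m)}$ with $\alpha(m)$ as stated, using the case analysis above. The main obstacle — really the only one — is getting the $2$-adic bookkeeping exactly right, i.e. verifying that the exceptional behaviour at $p=2$ in \cite{serre_p_adic} produces precisely $\alpha(m) = \max(1, m-2)$ and not, say, $m-1$ or $m-2$ uniformly; this is a matter of carefully reading the statement in \cite{serre_p_adic} for the prime $2$ and matching conventions, rather than of any new argument.
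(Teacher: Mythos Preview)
Your approach is the same as the paper's: both simply invoke \cite[Th\'eor\`eme~1]{serre_p_adic} and read off the exceptional modulus at $p=2$. The paper's only additional remark is that, since it uses the naive definition $M_k(\Z_2) = M_k(\Z)\otimes\Z_2$, one first replaces $f$ and $g$ by forms $f'\in M_k(\Z)$, $g'\in M_{k_1}(\Z)$ congruent to them modulo~$2^m$, so that Serre's theorem (stated for forms with rational coefficients) applies verbatim.
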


Of course, there is a version of the theorem for odd primes, but we will not need that.

\begin{proof} By our definition of modular forms with coefficients in $\Z_2$, there exist modular forms $f' \in M_k(\Z)$ and $g' \in M_{k_1}(\Z)$ such that $f \equiv f' \pmod{2^m}$ and $g \equiv g' \pmod{2^m}$. The hypothesis then says that $f' \equiv g' \pmod{2^m}$. We can now use \cite[Th\'{e}or\`{e}me 1]{serre_p_adic} to finish the proof.
\end{proof}

\begin{proof}[Proof of Theorem \ref{thm:weightbound}] Let us first show that the last statement of the theorem, i.e., the weak weight bound, follows from the first. From the first statement, any strong eigenform modulo $2^m$ is the reduction of a form that can be written as a linear combination of monomials $Q^a \Delta^c$, or $R Q^a \Delta^c$, and where $c\le C(m)$. Now, from the $q$-expansion of $Q$ we have that $Q\equiv 1 \pmod{2^4}$ whence $Q^{2^s} \equiv 1 \pmod{2^{4+s}}$. Suppose that $m\ge 4$. Then for any non-negative $a$ we have $Q^a \equiv Q^{a'} \pmod{2^m}$ for some $a' \le 2^{m-4}$. For such an $a'$ the weight of a monomial $R Q^{a'} \Delta^c$ is $\le 6 + 4\cdot 2^{m-4} + 12C(m) = 6 + 2^{m-2} + 12C(m)$, and the claim follows. For $m=1,2,3$ the claim follows from the congruences $Q\equiv R \equiv 1 \pmod{2^3}$.
\smallskip

We now show the existence of the constant $C(m)$ by induction on $m$. For $m=1$, the result is classical, and it is implied by Proposition \ref{prop:hatada1} that we can take $C(1) = 1$.

Assume $m>1$ and that the statement is true for $m-1$. Let $f\in S_k(\Z /2^m \Z)$ be a strong eigenform modulo $2^m$. The reduction of $f$ modulo $2^{m-1}$ is a strong eigenform modulo $2^{m-1}$. By the induction hypothesis, $\deg_{m-1}{(f \pmod{2^{m-1}})} \le C(m-1)$. Thus, $(f \pmod{2^{m-1}})$ is the reduction modulo $2^{m-1}$ of a form $g$ of weight at most $\kappa(m-1)$ and coefficients in $\Z_2$ and for which the highest power of $\Delta$ occurring in the expansion of $g$ as a sum of monomials $Q^a\Delta^c$, or $RQ^a\Delta^c$, is bounded by $C(m-1)$.

Let the weights of $f$ and $g$ be $k$ and $k_1$, respectively. Since $f$ and $g$ have the same reduction modulo $2^{m-1}$ we know by Theorem \ref{thm:weight_congruence} that
$$
k \equiv k_1 \pmod{2^{\alpha(m-1)}} .
$$

Replacing $f$ by $fQ^{2^s}$ with a sufficiently large $s$, we may assume that $k\ge k_1 + 6$. Write $k = k_1 + t\cdot 2^{\alpha(m-1)}$. Suppose first that $m\ge 5$. Then $Q^{2^{m-5}} \equiv 1 \pmod{2^{m-1}}$ and so the form
$$
g_1 := g\cdot (Q^{2^{m-5}})^t
$$
is of weight $k$, and has the same reduction modulo $2^{m-1}$ as $f$. In the cases $2\le m \le 4$ one also finds a form $g_1$ with these properties, by taking $g_1 := g \cdot Q^{r}$ when $k\equiv k_1 \pmod{4}$, and $g_1 := g \cdot R Q^{r}$ when $k\equiv k_1 + 2 \pmod{4}$ with the appropriate power $r$. It works because $Q \equiv R \equiv 1 \pmod{2^3}$.

Also, the highest power of $\Delta$ occurring when we expand $g_1$ in a sum of monomials in $Q$, $\Delta$, and, possibly, $R$, is bounded from above by $C(m-1)$. This follows because $g$ has that same property. By the argument in the beginning of the proof, it follows that the form $g_1$ is congruent modulo $2^m$ to a form $g_2$ of weight bounded by a constant $w(m)$ depending only on $m$ (specifically, one can take the weight bound from the beginning of the proof with $C(m)$ replaced by $C(m-1)$.) Clearly then, if $\ell$ is any prime number we must have
$$
\deg_m T_\ell g_1 = \deg_m T_\ell g_2 \le \frac{1}{12} w(m).
$$

Consider now that we have
$$
f \equiv g_1 + 2^{m-1} h \pmod{2^m}
$$
with some modular form $h$ with coefficients in $\Z_2$ and weight $k$.

Now, $h \pmod{2}$ is a polynomial in $\Delta$, and if we can bound the degree of this polynomial we are done.

Let $\lambda_2, \lambda_3$ and $\lambda_5$ be respectively the eigenvalues of the operators $T_2$, $T_3$, and $T_5$ associated to $f$. By Proposition \ref{prop:hatada1}, we know that $\lambda_2 \equiv \lambda_3 \equiv \lambda_5 \equiv 0 \pmod{2}$. Thus for $\ell \in \{2,3,5\}$, we have:
$$
T_\ell f \equiv T_\ell g_1 + 2^{m-1} T_\ell h \equiv \lambda_\ell f \equiv \lambda_\ell g_1 \pmod{2^m}
$$
which gives
$$
2^{m-1} T_\ell h \equiv \lambda_\ell g_1 - T_\ell g_1 \pmod{2^m}
$$
for $\ell \in \{2,3,5\}$. Thus,
$$
\deg_m (2^{m-1} T_\ell h) \le \frac{1}{12} w(m)
$$
and hence
$$
\deg_1 (T_\ell h) \le \frac{1}{12} w(m)
$$
for $\ell \in \{2,3,5\}$.

Now split $(h \pmod{2})$ into even and odd parts as explained above:
$$
(h \pmod{2}) = h_e + h_o .
$$
We have
$$
\deg_1{T_\ell h_e} ~,~ \deg_1{T_\ell h_o} \le \frac{1}{12} w(m)
$$
for $\ell \in \{2,3,5\}$.

Consider the classical $U$ and $V$ operators on mod $2$ modular forms. For the even part $h_e$ we have $h_e = \phi^2 = V(\phi)$ for some mod $2$ modular form $\phi$. Since $T_2 \equiv U \pmod{2}$, we see that:
$$
T_2 h_e = UV(\phi) = \phi .
$$

Hence $\deg_1{\phi} \le \frac{1}{12} w(m)$, and so $\deg_1{h_e} \le \frac{1}{6} w(m)$.

For the odd part, we have $\deg_1{T_\ell h_o} \le \frac{1}{12} w(m)$ for $\ell \in \{3,5\}$. By Proposition \ref{prop:oddbound}, it follows that $\deg_1{h_o}$ is bounded by $N(\lfloor \frac{1}{12} w(m) \rfloor)$ if $\lfloor \frac{1}{12} w(m) \rfloor$ is odd, and by $N(\lfloor \frac{1}{12} w(m) \rfloor + 1)$ if $\lfloor \frac{1}{12} w(m) \rfloor$ is even.

We are done.\end{proof}

\begin{rem}\label{remark:thm_N=1,p=2} As we noted above, Theorem \ref{thm:weightbound} holds more generally for strong eigenforms that are reductions of forms with coefficients in the ring of integers of the maximal unramified extension of $\Q_2$, and with the same constant $C(m)$. The proof is essentially the same.

If one allowed non-trivial ramification in the coefficient field the argument breaks down at the induction step: in going from $m-1$ to $m$ one would need not $1$, but $e$ inductional steps, and thus one looses control over the constants involved, or rather, they will depend on $e$.
\end{rem}

\subsubsection{Explicit bounds for low values of \texorpdfstring{$m$}{m}}\label{subsubsection:explicit bounds} It is natural to ask for an explicit ``formula'' for the constants $C(m)$, but we have not been able to find one. For any given $m$, though, a constant $C(m)$ that works in Theorem \ref{thm:weightbound} can in principle be determined, the main obstacle being determining constants $N(\cdot)$ that work in Proposition \ref{prop:oddbound}. We give now examples for the low values $m=1,2,3,4$.

We have $C(1) = 1$ as already remarked and used in the above. To determine constants $C(m)$ for $m=2,3,4$, we refer back to the inequalities appearing at the end of the proof of Theorem \ref{thm:weightbound}:

$$
\deg_1{h_e} \le \frac{1}{6} w(m),
$$
and:
$$
\deg_1{h_o} \leq \left\{ \begin{array}{ll} N(\lfloor \frac{1}{12} w(m) \rfloor) & \mbox{ if $\lfloor \frac{1}{12} w(m) \rfloor$ is odd} \\
N(\lfloor \frac{1}{12} w(m) \rfloor + 1) & \mbox{ if $\lfloor \frac{1}{12} w(m) \rfloor$ is even} \end{array} \right.
$$
where, as in the beginning of the proof, we have:
$$
w(m) \leq \left\{ \begin{array}{ll} 6 + 2^{m-2} + 12C(m-1) & \mbox{ if $m\ge 4$} \\
12C(m-1) & \mbox{ if $m=2,3$.} \end{array} \right.
$$

By the proof of Theorem \ref{thm:weightbound}, it then follows that we can take:
$$
C(m) = \sup\{C(m-1), \lfloor \frac{1}{6} w(m) \rfloor, N(\lfloor \frac{1}{12} w(m) \rfloor) \}.
$$
Using a computer, we compute the following values for the function $N(\cdot)$:
$$
\begin{tabular}{ c | c }
  $k$ & $N(k)$ \\
  \hline
    1 & 5 \\
    5 & 17 \\
    17 & 65 \\
  \hline
\end{tabular}
$$

We also check that the function $N$ is non-decreasing on the set of odd integers $k$ such that $1 \leq k \leq 100$. The calculation of the values of $C(m)$ are summarized in the following table:

$$
\begin{tabular}{ c | c | c | c | c | c}
  $m$ & $w(m) \leq$ & $\lfloor \frac{1}{6} w(m) \rfloor \leq$ & $\lfloor \frac{1}{12} w(m) \rfloor \leq$ & $N(\lfloor \frac{1}{12} w(m) \rfloor) \leq $ & $C(m)$ \\
  \hline
    1 & -   & -   & -  & - & 1  \\
    2 & 19  & 3   & 1  & 5 & 5  \\
    3 & 68  & 11  & 5  & 17 & 17\\
    4 & 214 & 35  & 17 & 65 & 65\\
  \hline
\end{tabular}
$$

A computer search shows that these values are sharp for $m = 2$ and $m = 3$, i.e., in each of these cases there exists a weak eigenform modulo $2^m$ for which $\deg_m$ attains the upper bound $C(m)$. We do not know whether the value for $C(4)$ is sharp, as the calculations become too demanding.

\subsection{Some numerical data}\label{subsection:numerical_data} We will finally present a bit of numerical data that can be seen as an experimental approach to the constant $k(N,p,m)$ of Theorem~\ref{thm:weak_weight_bounds}.

The following table summarizes our data. The explanation of the table is this: for each entry we generated all eigenforms of weight $\le k_{max}$ on the group in question; then, we looked at the reduction modulo $p^m$ of each of these eigenforms $f$ and determined the smallest weight $k(f)$ where it occurs weakly modulo $p^m$; the number $k$ in the corresponding entry is the maximum of the $k(f)$ for $f$ in this particular set of eigenforms.

\begin{tabular}{ c | c | c | c | c } Group & $p$ & $m$ & $k$ & $k_{max}$ \\
\hline
$\Gamma_0(1)$ & 5 & 2 & 76 & 320 \\
\hline
$\Gamma_0(1)$ & 5 & 3 & 276 & 288 \\
\hline
$\Gamma_0(1)$ & 7 & 2 & 148 & 246 \\
\hline
$\Gamma_0(1)$ & 11 & 2 & 364 & 374 \\
\hline
$\Gamma_0(2)$ & 5 & 2 & 76 & 174 \\
\hline
$\Gamma_0(2)$ & 5 & 3 & 276 & 316 \\
\hline
$\Gamma_0(2)$ & 7 & 2 & 148 & 246 \\
\hline
$\Gamma_0(2)$ & 11 & 2 & 364 & 370 \\
\hline
$\Gamma_0(3)$ & 5 & 2 & 76 & 174 \\
\hline
$\Gamma_0(3)$ & 5 & 3 & 276 & 278 \\
\hline
$\Gamma_0(3)$ & 7 & 2 & 148 & 222 \\
\hline
$\Gamma_0(5)$ & 5 & 2 & 76 & 138 \\
\hline
$\Gamma_0(9)$ & 5 & 2 & 76 & 150 \\
\hline
$\Gamma_1(3)$ & 5 & 2 & 76 & 174 \\
\hline
$\Gamma_1(3)$ & 5 & 3 & 276 & 296 \\
\hline
$\Gamma_1(3)$ & 7 & 2 & 148 & 204 \\
\hline
$\Gamma_1(11)$ & 5 & 2 & 76 & 88 \\
\hline
\end{tabular}
\smallskip

Thus, the number $\kappa$ can be seen as an ``experimental value'' for the constants occurring in Theorem~\ref{thm:weak_weight_bounds}. The values of $k$ in the table would be consistent with a more precise version of the statement of Theorem~\ref{thm:weak_weight_bounds}, namely that it holds with a constant $k(N,p,m)$ that is in fact independent of $N$, and has the following precise value:
$$
k(N,p,m) = 2p^m + p^2 + 1
$$
when $m\ge 2$.

However, we certainly do not feel that the extent of our data above warrants us actually making this conjecture. It merely raises the question of whether this is a general bound. If it is, we feel that it would be suggestive of a relatively ``elementary'' reason for that bound. We hope to return to this question elsewhere.

The reader may recall that there is in fact an established value for $k(N,p,1)$, namely $k(N,p,1) = p^2+p$, cf.\ the work of Jochnowitz in \cite{jochnowitz_local_components}, \cite{jochnowitz_finiteness}, specifically \cite[Lemma 4.4]{jochnowitz_local_components}. It might then be objected that this value does not seem to fall into any easily discernible pattern with the above values for $m\ge 2$. However, one should remember that the established value $k(N,p,1) = p^2+p$ is intimately connected with the behavior of the $\theta$ operator modulo $p$ and that, as the paper \cite{ck} shows, the $\theta$ operator modulo $p^m$ behaves in a much more complicated way when $m\ge 2$. Hence the authors do not feel that one has very much guidance from the established value of $k(N,p,1)$ when it comes to guessing the optimal shape of the constant $k(N,p,m)$ for $m\ge 2$.

The computations were done in MAGMA, cf.\ \cite{Magma}.

\subsection{Further questions} All of the results above concerning ``weight bounds'' are weight bounds for strong eigenforms. Thus, a natural question to ask is whether there exist ``weak weight bounds for weak eigenforms for fixed $N$, $p$, and $m$ as above''. Cf.\ the discussion in \ref{subsection:weight_bounds}.

We do not know the answer to this question, but we would like to remark that the following statement can be proved and can be seen as pointing somewhat in the direction of an affirmative answer.

\begin{prop} There exists a constant $w=w(N,p,m,e)$ such that: if $f\in S_k(O/\p^{\epsilon (m-1)+1})$ is a weak eigenform mod $p^m$ where $O$ is the valuation ring of a finite extension $K/\Q_p$ with ramification index $\epsilon \le e$ and $\p$ the maximal ideal of $O$, then there exists a weak eigenform $g$ in weight $\le w$ such that:
$$
f \equiv g \pmod{p^m} .
$$
\end{prop}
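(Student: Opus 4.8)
The plan is to run the argument of Theorem~\ref{thm:weak_weight_bounds} with a weak eigenform modulo~$p^m$ in place of a classical one, and then to deal separately with the $T_p$-eigenvalue, which for a weak eigenform is an extra datum not determined by the coefficients of index prime to~$p$. \emph{First reductions.} Fix $k_0$ with $2\le k_0\le 1+\tilde\varphi(p^m)$. Multiplying $f$ by a suitable power of a classical Eisenstein series of weight $\tilde\varphi(p^m)$ whose $q$-expansion is $\equiv 1\pmod{p^m}$, we may assume $k\equiv k_0\bmod\tilde\varphi(p^m)$ and $k\ge m+1$; this leaves the $q$-expansion unchanged modulo~$p^m$ and preserves the property of being a weak eigenform modulo~$p^m$, since $\ell^{\tilde\varphi(p^m)}\equiv 1\pmod{p^m}$ for every prime $\ell\ne p$, so that the operators $T_n$ act identically modulo~$p^m$ on $q$-expansions in all weights $\equiv k_0\bmod\tilde\varphi(p^m)$. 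As usual the small primes $p\in\{2,3\}$ require minor adjustments that we suppress.

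\emph{The eigensystem away from~$p$.} By the naive definition of modular forms with coefficients in $O/\p^{\epsilon(m-1)+1}$ we lift $f$ to a modular form (not, a priori, an eigenform) $\tilde f$ of weight~$k$ on $\Gamma_1(N)$ with coefficients in~$O$. Applying Eichler--Shimura at level $\Gamma_1(N)\cap\Gamma(p^m)$, reducing, and dividing out the $\p$-adic content, we obtain a nonzero class $\bar c$ in
$$
\widetilde H_O:=\bigoplus_{k'\ge 2,\ k'\equiv k_0 \mod \tilde\varphi(p^m)}H^1\!\big(\Gamma_1(N)\cap\Gamma(p^m),\,V_{k'-2}\otimes O/\p^{\epsilon(m-1)+1}\big),
$$
which, $f$ being a weak eigenform, is an eigenclass for the $p$-deprived Hecke algebra with $T_\ell\bar c=a_\ell(f)\bar c$ for all primes $\ell\ne p$. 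Since $p^m$ annihilates $O/\p^{\epsilon(m-1)+1}$, the group $\Gamma(p^m)$ acts trivially on each $V_w\otimes O/\p^{\epsilon(m-1)+1}$ and each relevant matrix $\delta_j^\iota$ is congruent modulo~$p^m$ to a diagonal matrix with entries $\ell$ and~$1$; thus the computation in the proof of Proposition~\ref{prop:ce-disc-2} goes through verbatim with $O/\p^{\epsilon(m-1)+1}$ in place of $\Z/p^m\Z$, and the $p$-deprived Hecke action on $\widetilde H_O$ factors through $\bigoplus_{i=0}^{\tilde\varphi(p^m)-1}H^1(\Gamma_1(N)\cap\Gamma(p^m),O/\p^{\epsilon(m-1)+1})(i)$, so that by a universal-coefficient argument the eigensystem $\{a_\ell(f)\}_{\ell\ne p}$ factors through a quotient of $\TT'$ of size bounded in terms of $N$, $p$, $m$. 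As in Lemma~\ref{lem:ce-disc-3} and the proof of Theorem~\ref{thm:weak_weight_bounds}, this eigensystem is therefore realized on $\TT'_{\le r}$ for some $r$ bounded in terms of $N$, $p$, $m$; multiplying by classical Eisenstein series as in Theorem~\ref{thm:weak_weight_bounds} we obtain a normalized modular form $g=\sum b_nq^n$ of weight $\le r$ on $\Gamma_1(N)$ that modulo~$p^m$ is a weak eigenform outside~$p$ and, comparing nebentypus and using $k\equiv k_g\bmod\tilde\varphi(p^m)$ in the Hecke recursions, satisfies $b_n\equiv a_n(f)\pmod{p^m}$ for all $n$ coprime to~$p$.

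\emph{The $T_p$-eigenvalue.} As $f$ is a $T_p$-eigenform of weight $k\ge m+1$ with eigenvalue $a_p:=a_p(f)$, the recursion $a_{pn}(f)=a_pa_n(f)-p^{k-1}\langle p\rangle a_{n/p}(f)$ collapses modulo~$p^m$ to $a_{pn}(f)\equiv a_pa_n(f)$, so that $f\equiv\sum_{j\ge 0}a_p^{\,j}V_p^j\big(f^{(p)}\big)\pmod{p^m}$ where $f^{(p)}:=\sum_{(n,p)=1}a_n(f)q^n$; and $f^{(p)}=g^{(p)}:=g-V_pU_pg$, a modular form of weight $\le r$ and level a bounded power of $p$ times $N$, because $f$ and $g$ agree at all indices coprime to~$p$. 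If $v_\p(a_p)\ge 1$ the series terminates modulo~$p^m$ after at most $\epsilon(m-1)\le e(m-1)$ terms, so $h:=\sum_{0\le j\le e(m-1)}a_p^{\,j}V_p^j\big(g^{(p)}\big)$ is the reduction modulo~$p^m$ of a modular form of weight $\le r$ and of level a fixed power of $p$ (bounded in terms of $m$, $e$) times~$N$; from $a_{pn}(f)\equiv a_pa_n(f)$ one checks that $h$ is a weak eigenform modulo~$p^m$ — an eigenform for all $T_\ell$, $\ell\ne p$, and a $U_p$-eigenform with eigenvalue $a_p$ — with $h\equiv f\pmod{p^m}$. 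If $a_p$ is a unit, the same identity exhibits $\sum_ja_p^{\,j}V_p^j(g^{(p)})$ as an ordinary $U_p$-stabilization, again living in weight $\le r$ and level $\Gamma_1(N)\cap\Gamma_0(p)$. In either case $h$ is a weak eigenform modulo~$p^m$ of weight $\le w$, for a constant $w=w(N,p,m,e)$, congruent to~$f$ modulo~$p^m$, which is what we wanted.

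\emph{The main obstacle.} Two points carry the difficulty and force the dependence on~$e$. First, making the Eichler--Shimura step unconditional: one must ensure that a weak eigenform modulo~$p^m$ survives as a \emph{nonzero} torsion cohomology class, i.e.\ control the $\p$-adic content and the torsion in $H^1(\Gamma,V_w)\otimes O/\p^{\epsilon(m-1)+1}\hookrightarrow H^1(\Gamma,V_w\otimes O/\p^{\epsilon(m-1)+1})$ — harmless for $p\ge 5$ but delicate for $p\in\{2,3\}$. Second, and more seriously, the reconstitution of~$f$ from $f^{(p)}$ and~$a_p$ costs a number of $V_p$-iterations growing linearly with the ramification, so the weak eigenform produced has level divisible by a power of~$p$ bounded only in terms of $m$ and~$e$; removing this dependence — and in particular descending back to level exactly $\Gamma_1(N)$, perhaps by instead routing the $p$-power coefficients through the $\theta$-operator modulo~$p^m$ and hence \cite{ck} and the hypothesis $p\ge 5$ — is exactly the obstruction that, in the ramified case, also breaks the inductive argument of Remark~\ref{remark:thm_N=1,p=2}, and is the reason we can establish only the weaker Proposition above.
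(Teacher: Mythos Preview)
The paper states this proposition without proof (it merely asserts that it ``can be proved''), so there is nothing to compare against directly. Your strategy of rerunning the argument of Theorem~\ref{thm:weak_weight_bounds} and then repairing the $T_p$-eigenvalue is natural and presumably close to what the authors intend, but two steps do not go through as written.

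The Eichler--Shimura step has a genuine precision loss. You lift $f$ to $\tilde f\in S_k(O)$ (not an eigenform), form $c=ES(\tilde f)\in H_k\otimes K$, scale to a primitive $c'=\pi^{-v}c\in H_k\otimes O$, and reduce. But since $(T_\ell-a_\ell)\tilde f=\pi^{\epsilon(m-1)+1}g_\ell$ with $g_\ell\in S_k(O)$, one only gets $(T_\ell-a_\ell)c'=\pi^{\epsilon(m-1)+1-v}ES(g_\ell)$, and there is no reason $\pi^{-v}ES(g_\ell)\in H_k\otimes O$; thus $\bar c'$ is an eigenclass only modulo $\p^{\epsilon(m-1)+1-v}$. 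Your remark that this is ``harmless for $p\ge 5$'' conflates two issues: torsion in $H^1(\Gamma,V_w)$ (indeed controlled for $p\ge 5$) and the $\p$-adic denominators of the transcendental Eichler--Shimura map (not controlled by this hypothesis). What is really needed is that the ideal $I\subset\TT'$ of Proposition~\ref{prop:ce-disc-2} annihilates $\bigoplus_k M_k(\Gamma_1(N),\Z_p)/p^m$, and this integral comparison between the cohomological and modular-forms Hecke modules is precisely what your argument skips.

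The case $a_p\in O^\times$ also fails. The formal series $\sum_{j\ge 0}a_p^{\,j}V_p^j(g^{(p)})$ is a well-defined $q$-expansion and a $U_p$-eigenvector, but when $a_p$ is a unit the sum does not truncate modulo~$p^m$, and since $V_p^j(g^{(p)})$ has level divisible by $p^{j+O(1)}$ the sum is not the $q$-expansion of a classical form of any finite level. Calling it an ``ordinary $U_p$-stabilization'' does not help: that construction applies to a classical \emph{eigenform} $h$ and produces $h-\beta V_ph$ for a specific root $\beta$, whereas your $g$ is not an eigenform. The ordinary case presumably requires an appeal to Hida's control theorem (the ordinary Hecke algebra being finite over $\Lambda$, so that ordinary eigensystems modulo $p^m$ are realized in bounded weight), but this is a genuinely different argument that your sketch does not supply.
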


Thus, if condition $\mathbf{(B)}$ of \ref{subsection:finiteness_conditions} holds, the above proposition implies the existence of ``weak weight bounds for weak eigenforms''. The question of establishing such bounds unconditionally is naturally raised.

\section{Acknowledgements} The authors are very grateful to Frank Calegari for having explained to them the idea of proof of Theorem~\ref{thm:weak_weight_bounds} and for letting them include it in the paper.
The authors would also like to thank Gebhard B\"ockle for various interesting conversations about the questions in this paper.
G.W. thanks Panagiotis Tsaknias for a huge number of very enlightening and insightful remarks on this
and related subjects.
I.K.\ and N.R.\ acknowledge support by grants from the Danish Council for Independent Research, and from VILLUM FONDEN through the network for Experimental Mathematics in Number Theory, Operator Algebras, and Topology.
G.W.\ acknowledges support by the Fonds National de la Recherche Luxembourg (INTER/DFG/FNR/12/10/COMFGREP).

Additionally, the authors would like to thank the anonymous referee for a careful reading and suggestions that helped improve the presentation.

\end{document}